\newtheorem{theorem}{Theorem}
\newtheorem{lemma}[theorem]{Lemma}
\newtheorem{corollary}[theorem]{Corollary}
\newtheorem{proposition}[theorem]{Proposition}
\newtheorem{observation}[theorem]{Observation}
\theoremstyle{definition}
\newtheorem{problem}[theorem]{Problem}
\theoremstyle{remark}
\newcommand{\cH}{{\cal H}}
\newcommand{\cF}{{\cal F}}
\newcommand{\cS}{{\cal S}}
\newcommand{\cX}{{\cal X}}
\newcommand{\cC}{{\cal C}}
\newcommand{\cK}{{\cal K}}
\newcommand{\cP}{{\cal P}}
\newcommand{\cA}{{\cal A}}
\newcommand{\cB}{{\cal B}}
\newcommand{\cT}{{\cal T}}
\DeclareMathOperator{\diam}{diam}
\DeclareMathOperator{\dist}{dist}
\DeclareMathOperator{\mut}{\mu_{\rm t}}
\DeclareMathOperator{\mud}{\mu_{\rm d}}
\DeclareMathOperator{\muo}{\mu_{\rm o}}
\DeclareMathOperator{\gp}{gp}
\DeclareMathOperator{\ex}{ex}
\begin{document}



\title{Mutual-visibility problems in Kneser and Johnson graphs}

\author{G\" ulnaz Boruzanl{\i} Ekinci$^{a}$ \footnote{Corresponding author.} \qquad \qquad  Csilla Bujt\'{a}s$^{b,c}$ \\\\
	$^{a}$ \small Department of Mathematics, Ege University,
	Izmir, Turkey\\
	\small{\tt  gulnaz.boruzanli@ege.edu.tr}\\
		$^{b}$ \small Institute of Mathematics, Physics and Mechanics, Ljubljana, Slovenia \\
	$^{c}$ \small University of Pannonia, Veszpr\' em, Hungary\\
		\small {\tt bujtasc@gmail.com}\\
}

\maketitle

\begin{abstract}
	Let $G$ be a connected graph and $\cX \subseteq V(G)$. By definition, two vertices $u$ and $v$ are $\cX$-visible in $G$ if there exists a shortest $u,v$-path with all internal vertices being outside of the set $\cX$. The largest size of $\cX$ such that any two vertices of $G$ (resp.\ any two vertices from $\cX$) are $\cX$-visible is the total mutual-visibility number (resp.\ the mutual-visibility number) of $G$.
	
In this paper, we determine the total mutual-visibility number of Kneser graphs, bipartite Kneser graphs, and Johnson graphs. The formulas proved for Kneser, and bipartite Kneser graphs are related to the size of transversal-critical uniform hypergraphs, while the total mutual-visibility number of Johnson graphs is equal to a hypergraph Tur\' an number. Exact values or estimations for the mutual-visibility number over these graph classes are also established.
    
\end{abstract}
\noindent
\textbf{Keywords:} mutual-visibility set; total mutual-visibility set; Kneser graph; bipartite Kneser graph; Johnson graph; Tur\'an-type problem; covering design.

\medskip\noindent
\textbf{AMS Math.\ Subj.\ Class.\ (2020)}: 05C12, 05C38, 05C65, 05C76


\section{Introduction} \label{Sect-Intro}


 

Several studies have been conducted to address network visibility problems, both from theoretical and practical points of view. From the practical perspective, many papers focus on robot navigation problems (see, e.g., \cite{adhikary-2022, Cicerone-2023+, diluna-2017, poudel-2021}). From the theoretical perspective, the mutual-visibility notion in graphs was proposed by Di Stefano \cite{distefano-2022} and then investigated in several graph classes \cite{Cicerone-2023+3, Cicerone-2023+5, Cicerone-2023, Cicerone-2023+2, korze-2023}. As a natural extension, Cicerone \textit{et al.}\ \cite{Cicerone-2023+} proposed the notion of total mutual-visibility to provide a ``visibility'' property to all the network nodes instead of the nodes where robots are located only. Total mutual-visibility sets have been recently investigated in product graphs \cite{kuziak-2023+, tian-2023+} and Hamming graphs \cite{Bujtas-2023}. It is also shown that computing both of the mutual-visibility number and the total mutual-visibility number are NP-complete in general \cite{Cicerone-2023+4,distefano-2022}. Very recently, some new variants of the mutual-visibility notion were defined and discussed for different graph classes \cite{bresar2023lower, Cicerone-2023+4, Cicerone-2024+}. 
\medskip

For a connected graph $G$ with vertex set $V(G)$ and edge set $E(G)$, let $\cX$ be a subset of $V(G)$. Two vertices $u$ and $v$ from $V(G)$ are \textit{$\cX$-visible} if there exists a shortest $u,v$-path such that no internal vertex belongs to $\cX$. That is, $u$ and $v$ are $\cX$-visible if  there exists a shortest $u,v$-path $P$ in $G$ such that $ V(P) \cap \cX \subseteq \{u,v\}$.  If any two vertices from $\cX$  are $\cX$-visible then the set $\cX$ is a \textit{mutual-visibility set}, while if any two vertices from $V(G)$ are $\cX$-visible, then the set $\cX$ is a \textit{total mutual-visibility set} in $G$. The cardinality of a largest mutual-visibility (resp. total mutual-visibility) set in $G$ is defined to be the \textit{mutual-visibility number} (resp. \textit{total mutual-visibility number}), $\mu(G)$ (resp. $\mut(G)$), of $G$. By definition, $\mu(G) \geq \mut(G)$ holds for every (connected) graph $G$.
\medskip

It has been proved that the mutual-visibility problems are connected to several classical combinatorial problems which are extremely difficult to solve. To determine the mutual-visibility numbers for Cartesian products of two complete graphs is equivalent to the famous problem of Zarankievicz~\cite{Cicerone-2023}. The total mutual-visibility number of Hamming graphs and line graphs of complete graphs can be expressed as Tur\'an numbers for graphs or hypergraphs~\cite{Bujtas-2023, Cicerone-2024+}.

\medskip

In this manuscript, we prove exact formulas for the total mutual-visibility numbers of Kneser graphs, bipartite Kneser graphs, and Johnson graphs. For Kneser and bipartite Kneser graphs, these formulas show a direct connection to the minimum size of covering designs. For Johnson graphs, we prove that  $\mut(G)$ equals a hypergraph Tur\'an number.
Along the way, we also obtain exact values or estimations for the mutual-visibility number over these graph classes.

\subsection{Terminology}

This paper considers only simple undirected graphs that are \textit{connected}. The \textit{distance} between two vertices $u,v \in V(G)$, denoted by $\dist(u,v)$, is the length of a shortest path between $u$ and $v$. The maximum distance between any two vertices in $G$ is the \textit{diameter} of $G$ and is denoted by $\diam(G)$. Throughout the paper, $[n]$ stands for the set $\{1, \dots, n\}$ for every positive integer $n$. For graph theory concepts not defined here, we refer the reader to the book~\cite{west-2021}.
\medskip

A \textit{hypergraph} $\cH$ is a set system over the underlying vertex set $V(\cH)$. We assume, as it is usual, that the edge set $E(\cH)$ does not contain the empty set that is $E(\cH) \subseteq 2^{V(\cH)} \setminus \{\emptyset\}$. A hypergraph is \textit{$k$-uniform}, if every (hyper)edge $e \in E(\cH)$ consists of exactly $k$ vertices. Then, $2$-uniform hypergraphs correspond to graphs without loops. A $k$-uniform hypergraph is often called \textit{$k$-graph}, while an edge $e \in E(\cH)$ is a \textit{$k$-edge} if $|e|=k$. The \textit{degree} of a vertex $v \in V(\cH)$ is the number of edges incident to $v$, and $v$ is called \textit{isolated vertex} (isolate) if its degree is $0$. For a set $Y \subseteq V(\cH)$, the \textit{induced subhypergraph} contains those edges of $\cH$ that are fully contained in $Y$.

A set $T \subseteq V(\cH)$ is a \textit{transversal} in $\cH$ if $e\cap T \neq \emptyset$ for every edge $e \in E(\cH)$. The minimum cardinality of a transversal is the \textit{transversal number} $\tau(\cH)$ of the hypergraph.\footnote{Transversals are often called vertex cover, especially related to graphs. However, in this paper, we use the term ``cover'' in a different meaning (for subset relation between sets) and keep the term ``transversal'' as defined here.}
\medskip

Let $n$ and $k$ be integers such that $n\geq k\geq 1$. The \textit{Kneser graph} $KG(n,k)$ is the graph with vertices representing the $k$-subsets of the \textit{ground set} $[n]$, and where two vertices $A$ and $B$ are adjacent if and only if $A\cap B=\emptyset$ holds for the corresponding sets. Formally, the vertex set is $V(KG(n,k))={{[n]}\choose k}$ and the edge set is $$E(KG(n,k))=\{\{A,B\} : A,B\in V(KG(n,k)) {\rm ~and~} A\cap B=\emptyset\}.$$ 
This family of graphs was introduced in 1955 by M.~Kneser.  If $n<2k$, then $KG(n,k)$ consists of $n \choose k$ isolated vertices, and if $n=2k$, then $KG(n,k)$ is the union of disjoint copies of $K_2$. Moreover, $KG(n,1)$ is the complete graph on $n$ vertices. In this paper, we consider only connected Kneser graphs.  Thus, to avoid some trivial cases, we assume that $n\geq 2k+1$ and $k \ge 2$ for a Kneser graph $KG(n,k)$. 

The vertex set of a \textit{bipartite Kneser graph} $H(n,k)$ consists of vertices representing the $k$-element and $(n-k)$-element subsets of the ground set $[n]$. Two vertices $u$ and $v$ are adjacent if one of the represented sets is a subset of the other. By definition, $H(n,k)$ is bipartite for every $n >2k \ge 2$. The partite classes correspond to  ${[n] \choose k}$ and ${[n] \choose n-k}$, respectively. As $H(n,k) \cong H(n,n-k)$ and $H(2k,k)$ consists of $k$ copies of $K_2$-components, we will assume $n \ge 2k+1 \ge 5$ while studying mutual-visibility problems in bipartite Kneser graphs. 

The \emph{Johnson graph} $J(n,k)$ is the graph with vertex set consisting of all the $k$-subsets of the ground set $[n]$ and the edge set is 
$$E(J(n,k))=\{\{A,B\} : A,B\in V(J(n,k)) {\rm ~and~} |A\cap B|=k-1\}.$$
Observe that, by definition, $J(n,2)$ corresponds to the line graph of the complete graph $K_n$.
It is well-known that  $J(n,k) $ is isomorphic to $J(n,n-k)$ and that  $J(n,1)$ is isomorphic to the complete graph on $n$ vertices. To avoid these cases, in this paper, we deal with Johnson graphs $J(n,k)$ where $n\geq k+2 \ge 4$. 
\medskip

When we consider a graph $G$ that is a Kneser, bipartite Kneser, or a Johnson graph, the vertices will be denoted by italic capital letters, and \textit{the same notation will be used for a vertex of $G$ and the represented subset of $[n]$}. For a subset $\cS\subseteq V(G)$, we define the \textit{underlying hypergraph} $\cF(\cS)$ on the ground set $[n]$ that contains edges corresponding to the sets represented by the vertices in $\cS$. That is, if $G$ and $\cS \subseteq V(G)$ are given, 
$$E(\cF(\cS))=\{ S \subseteq [n]: S \in \cS \}.$$
If $G$ is a Kneser graph $KG(n,k)$ or a Johnson graph $J(n,k)$, then $\cF(\cS)$ is a $k$-graph for every nonempty vertex set $\cS$.
\medskip

\textit{Tur\' an-type problems} form a central area in extremal graph and hypergraph theory. 
Typically, in a hypergraph Tur\'an problem two integers $n$, $k$ and a $k$-graph $\cF$ are given. We want to  determine the maximum number of edges in a $k$-graph $\cH$ on an $n$-element vertex set such that $\cH$ contains no subhypergraph isomorphic to $\cF$. This maximum number is denoted by $\ex_k(n, \cF)$. If $k=2$ and $\cF$ is a graph, we omit the index $2$ and write simply $\ex(n, \cF)$. Remark that most of the hypergraph Tur\' an problems seem extremely difficult. Even the tight asymptotics for $\ex_k(n,\cF)$ as $n \to \infty$ might be hard to identify. For more results on the subject, we refer the reader to~\cite{keevash-2011} and \cite[Chapter 5]{gerbner-2019}.
\medskip

For three positive integers with $n \ge k \ge t$, an \textit{$(n, k, t)$ covering design} is a pair $(X, \cB)$, where $X$ is a set of $n$ elements
and $\cB$ is a collection of $k$-element subsets of $X$  (blocks) such that every $t$-element subset of $X$ is a subset of at least one
block from  $\cB$. The minimum number of blocks that may occur in an $(n,k,t)$ covering design is the \textit{covering number} $C(n,k,t)$. By double-countation, we obtain that ${k \choose t} C(n,k,t) \ge {n \choose t}$ is always true. Further, this inequality holds with equality if and only if a Steiner system $S(t,k,n)$ exists (see \cite{gordon}). This fact shows that determining the values $C(n,k,t)$ is a challenging problem in general. If $(X,\cB)$ is an $(n,k,t)$ covering design and $\cH=(X, \cB^c)$ is the $(n-k)$-uniform hypergraph containing the complements of the blocks $B \in \cB$, then no $t$-element subset of $X$ intersects every edge of $\cH$. Then, we have $\tau(\cH) > t$. As this implication also holds in the other direction, we may conclude that $C(n, n-k, t)$ is the minimum number of edges in a $k$-uniform hypergraph with $n$ vertices and transversal number $t+1$. This correlation was first remarked by D.B.~West  (see Chapter 7.1 in~\cite{Henning-Yeo}).

\subsection{Results and structure of the paper}

We start our study in Section~\ref{sec:1.5} by proving a lemma on the values of the covering number $C(n,k,t)$ under specific conditions.
 
In Section~\ref{sec:2}, the total mutual-visibility numbers of Kneser graphs $KG(n,k)$ are determined. While we prove explicit formulas in terms of $n$ and $k$ for the cases when $n \leq 3k-1$ or $2k^2 \leq n$, the formula for the middle range contains the constant $C(n, n-k, 2k-1)$. 

In Section~\ref{sec:3}, we determine the mutual-visibility number of $KG(n,k)$ for $n \ge 7k-5$ and estimate the value for the remaining cases.

In Section~\ref{sec:4}, the total mutual-visibility numbers of bipartite Kneser graphs $H(n,k)$ are determined. We also provide a lower bound for the mutual-visibility number $\mu(H(n,k))$.

Section~\ref{sec:5} is devoted to the study of Johnson graphs. The main theorem establishes equality between $\mut(J(n,k))$ and a hypergraph Tur\' an number for all nontrivial cases. The exact mutual-visibility number is proved for $J(n,2)$, and it is shown that $\mu(J(n,k))$ is sandwiched between two Tur\' an numbers in general.

In the last section, we answer a question from the recent manuscript~\cite{Cicerone-2024+} by determining the exact values for the mutual-visibility numbers of Kneser graphs $KG(n,2)$. Further, we give some remarks and formulate open problems related to the topic of this paper.

\section{A lemma on the covering number} \label{sec:1.5}

As a preparation for the proofs in Sections~\ref{sec:2} and \ref{sec:3}, here we mainly study the covering number.
First, we prove a short technical lemma.
\begin{lemma} \label{lem:binom}
If $n$ and $k$ are two positive integers and $k <n < 2k$, then
\begin{equation*} \label{eq:binom}
{n \choose k} > 2 \, {n-1 \choose k}.
\end{equation*}
\end{lemma}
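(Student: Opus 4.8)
The plan is to reduce the claim to a comparison between two consecutive binomial coefficients in row $n-1$ by means of Pascal's identity, and then to settle that comparison with a one-line ratio computation in which the hypothesis $n<2k$ enters decisively.

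First I would invoke Pascal's rule ${n \choose k} = {n-1 \choose k} + {n-1 \choose k-1}$. Substituting this into the target inequality ${n \choose k} > 2\,{n-1 \choose k}$ and subtracting ${n-1 \choose k}$ from both sides, the statement becomes equivalent to the cleaner inequality ${n-1 \choose k-1} > {n-1 \choose k}$. This reformulation is the heart of the argument; everything that follows is elementary.

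Next I would compare these two coefficients through their quotient. From the factorial definition one gets ${n-1 \choose k} \big/ {n-1 \choose k-1} = (n-k)/k$. The hypothesis $n < 2k$ is precisely the statement $n-k < k$, so this ratio is strictly less than $1$; hence ${n-1 \choose k} < {n-1 \choose k-1}$, which is exactly the reformulated inequality. The second hypothesis $k < n$ guarantees $0 \le k \le n-1$, so both binomial coefficients are genuine positive integers and the ratio manipulation is legitimate (note also that $k<n<2k$ forces $k \ge 2$, so the admissible range for $n$ is nonempty).

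I do not anticipate any real obstacle, as the argument is short. The only point requiring a little care is verifying that every binomial coefficient involved is nonzero, so that the cancellation following Pascal's identity and the passage to the ratio are both valid; this is ensured by the assumptions $k < n < 2k$. The essential content is simply that $n < 2k$ makes ${n-1 \choose k-1}$ the larger of the two adjacent coefficients.
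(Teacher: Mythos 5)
Your proof is correct, but it takes a different route from the paper's. You decompose via Pascal's rule ${n \choose k}={n-1\choose k}+{n-1\choose k-1}$ and reduce the claim to comparing the two adjacent coefficients ${n-1\choose k-1}$ and ${n-1\choose k}$ in row $n-1$, which you settle with the ratio $(n-k)/k<1$. The paper instead uses the single absorption-type identity ${n\choose k}=\frac{n}{n-k}{n-1\choose k}$ and observes that $n<2k$ forces the factor $\frac{n}{n-k}$ to exceed $2$, giving the inequality in one line without any intermediate reformulation. Both arguments hinge on exactly the same arithmetic fact ($n-k<k$), so neither is more general; the paper's version is slightly more economical, while yours makes transparent the combinatorial content that $n<2k$ places $k$ past the middle of row $n-1$, where the coefficients are decreasing. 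Your care about nonvanishing of the coefficients is fine but essentially automatic here, since $k<n$ gives $k\le n-1$ and $k-1\le n-1$, so all binomial coefficients involved are positive.
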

\begin{proof}
The condition $2k > n $ implies $\frac{n}{n-k} > 2$ and hence,
$$ {n \choose k} = \frac{n}{n-k}\, {n-1 \choose k} > 2\, {n-1 \choose k}
$$
holds. 
\end{proof}

Now, we concentrate on the covering numbers given in the form $C(n, n-k, 2k-1)$. To avoid trivial or non-defined cases, we will assume that $n \ge 3k$ and $k \ge 2$. Under these conditions, to simplify our notation, we set $$C^*(n,k)=C(n, n-k, 2k-1)$$ that is the minimum number of edges in a $k$-uniform hypergraph of order $n$ with transversal number $2k$.

\begin{lemma} \label{lem:C*}
Let $n$ and $k$ be integers.
\begin{itemize}
\item[$(i)$] If $k \ge 2$ and $n \ge 2k^2$, then $C^*(n,k)= 2k$.
\item[$(ii)$] If $k \ge 3$ and $n \ge 7k-5$, then $C^*(n,k) \le 2\, {2k-3 \choose k} +6.$
\item[$(iii)$] If $k \ge 2$ and $n \ge 2k^2+k$, then $C(n,n-k, 2k)= 2k+1$.
\end{itemize}

\end{lemma}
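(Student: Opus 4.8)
The plan is to dispatch the two exact equalities (i) and (iii) by a single elementary argument and to concentrate the real work on the upper bound in (ii). For the lower bounds I would use the general fact that $\tau(\cH)\le |E(\cH)|$ for every hypergraph $\cH$, since picking one vertex from each edge already produces a transversal; hence a $k$-graph with transversal number $2k$ (resp.\ $2k+1$) must have at least $2k$ (resp.\ $2k+1$) edges, yielding $C^*(n,k)\ge 2k$ and $C(n,n-k,2k)\ge 2k+1$. For the matching upper bounds I would take pairwise disjoint $k$-edges: a matching of $2k$ edges uses $2k^2$ vertices and has transversal number exactly $2k$, while a matching of $2k+1$ edges uses $2k^2+k$ vertices and has transversal number exactly $2k+1$. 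Under $n\ge 2k^2$ and $n\ge 2k^2+k$ these matchings fit into $[n]$, leaving the remaining vertices isolated (which changes neither $\tau$ nor the edge count), and this establishes (i) and (iii).

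For (ii) a matching is hopelessly wasteful in vertices, so the plan is a two-gadget construction. I would form one gadget as the vertex-disjoint union of two pieces: first, the complete $k$-uniform hypergraph on a fixed $(2k-3)$-element set, which contributes $\binom{2k-3}{k}$ edges and has transversal number $(2k-3)-(k-1)=k-2$; second, a small ``booster'' $\cB$ consisting of just $3$ edges with $\tau(\cB)=2$. Because transversal numbers add over vertex-disjoint unions, the gadget has transversal number $(k-2)+2=k$ and $\binom{2k-3}{k}+3$ edges. Taking two vertex-disjoint copies of the gadget then produces a $k$-graph with transversal number $2k$ and exactly $2\binom{2k-3}{k}+6$ edges; isolated vertices absorb any larger $n$.

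The heart of the matter is to realise the booster $\cB$ on as few vertices as possible. Two disjoint $k$-edges would give $\tau=2$ with only $2$ edges but would cost $2k$ vertices, and two such boosters would break the vertex budget. Instead I would take three $k$-edges with empty common intersection --- this alone forces $\tau\ge 2$, and $\tau\le 2$ is clear --- arranged with maximal pairwise overlap so that their union has only $\lceil 3k/2\rceil$ vertices; concretely, for even $k$ one splits a $\tfrac{3k}{2}$-set into three equal parts $X,Y,Z$ and uses $X\cup Y$, $Y\cup Z$, $Z\cup X$, with a minor asymmetric adjustment for odd $k$. The total number of vertices used is then $2\bigl((2k-3)+\lceil 3k/2\rceil\bigr)$, which equals $7k-6$ for even $k$ and $7k-5$ for odd $k$, so it never exceeds the available $n\ge 7k-5$.

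The main obstacle I anticipate is exactly this simultaneous control of two nearly tight budgets in (ii): the booster cannot be a matching, and the whole point is to trade one extra edge (three rather than two) for a saving of roughly $k/2$ vertices, which is precisely what brings the vertex count down to $7k-5$. I also expect the hypothesis $k\ge 3$ to be essential, since the complete $k$-graph on $2k-3$ vertices carries edges only when $2k-3\ge k$, i.e.\ $k\ge 3$.
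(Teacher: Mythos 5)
Your proposal is correct and follows essentially the same route as the paper: the lower bounds in (i) and (iii) via $\tau(\cH)\le|E(\cH)|$ with matchings of disjoint $k$-edges for the upper bounds, and for (ii) the disjoint union of two complete $k$-graphs on $2k-3$ vertices with two copies of the generalized triangle on $\lceil 3k/2\rceil$ vertices (your ``booster'' is exactly the paper's $\cH_1^{(k)}$, including the asymmetric tweak for odd $k$), giving the same vertex count $\le 7k-5$ and edge count $2\binom{2k-3}{k}+6$. No gaps; the only difference is cosmetic packaging of the components into two identical gadgets rather than listing the four pieces separately.
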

\begin{proof}
To prove $(i)$, we note that having $\tau = 2k$ needs at least $2k$ edges. Thus, $C^*(n,k) \ge 2k$. Moreover, the condition $n \ge 2k^2$ allows us to construct a $k$-uniform hypergraph on $n$ vertices with $2k$ pairwise disjoint edges. As its transversal number equals $2k$, the minimum number of edges in such a hypergraph is at most $2k$. Therefore, $C^*(n,k)=2k$.
\medskip

To prove $(ii)$ we consider a hypergraph $\cH_{n,k}$ built from the following types of components.
\begin{itemize}
\item A $k$-uniform generalized triangle $\cH_1^{(k)}$ has $n_1^{(k)}= k + \lceil \frac{k}{2} \rceil$ vertices partitioned into three sets $V_1,  V_2, V_3$  such that $|V_1|= \lfloor \frac{k}{2}\rfloor $ and $|V_2|=|V_3| =\lceil \frac{k}{2}\rceil $. If $k$ is even, we simply put
$$E(\cH_1^{(k)})= \{V_1 \cup V_2,  V_1 \cup V_3, V_2 \cup V_3 \}.$$
If $k$ is odd, we specify a vertex $x_3 \in V_3$ and replace the edge $V_2 \cup V_3$ with the $k$-edge $V_2 \cup (V_3 \setminus \{x_3\})$ in the definition of $E(\cH_1^{(k)})$. Then, $\cH_1^{(k)}$ has $m_1^{(k)}= 3$ edges and $\tau(\cH_1^{(k)})=2$ for all $k \ge 2$.
\item A complete $k$ uniform hypergraph $\cK_{2k-3}^{(k)}$ is defined on $2k-3$ vertices such that all $k$-element subsets of the vertex set are hyperedges in $\cK_{2k-3}^{(k)}$. Then, the size of this hypergraph is $m_2^{(k)}= {2k-3 \choose  k}$ and 
$$\tau(\cK_{2k-3}^{(k)})=(2k-3)-(k-1)=k-2.$$
\end{itemize}  
Now, to define the hypergraph $\cH_{n,k}$, take two disjoint copies of $\cH_1^{(k)}$ and two disjoint copies of $\cK_{2k-3}^{(k)}$. They together cover
$$2 \left(k + \left\lceil \frac{k}{2} \right\rceil \right) + 2(2k-3) \leq 7k-5 
 $$
vertices. The remaining vertices (if any) remain isolates. The transversal number of $\cH_{n,k}$ is  
$$\tau(\cH_{n,k}) = 2\cdot 2 + 2 (k-2)=2k.$$
By definition of the covering number, we may conclude that
$$C^*(n,k) \le |E(\cH_{n,k}  )| =2\, {2k-3 \choose k} +6.$$

The proof for part $(iii)$ is similar to that for $(i)$. Since $C(n, n-k, 2k)$ is the minimum number of edges in a $k$-uniform hypergraph on $n$ vertices having transversal number at least $2k+1$, at least $2k+1$ edges are needed. On the other hand, $n \ge 2k^2+k$ vertices are enough to take $2k+1$ pairwise vertex-disjoint $k$-edges and achieve $\tau \ge 2k+1$.
\end{proof}

\section{Total mutual-visibility in Kneser graphs} \label{sec:2}

The main result of this section is Theorem~\ref{thm:mut-Kneser} that determines the total mutual-visibility number of Kneser graphs for all nontrivial cases. In its proof, we will use the following lemma. 

 \begin{lemma} \label{thm:mut-transv}
 Let $\cX$ be a set of vertices of the Kneser graph $KG(n,k)$ and $\overline{\cX}= V(KG(n,k)) \setminus \cX$. For every  $n \geq 3k-1$, the set $\cX$ is a total mutual-visibility set in $KG(n,k)$ if and only if $\tau(\cF(\overline{\cX}))\geq 2k$. 
 \end{lemma}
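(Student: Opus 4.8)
The plan is to exploit the fact that the hypothesis $n \ge 3k-1$ forces $KG(n,k)$ to have diameter $2$, which collapses the total mutual-visibility condition to a statement about common neighbours. First I would record this diameter bound: for distinct non-adjacent $A,B$ (so $A \cap B \ne \emptyset$ and $|A\cup B| = 2k - |A\cap B| \le 2k-1$) a common neighbour is any $k$-set $C \subseteq [n]\setminus(A\cup B)$, and such a $C$ exists precisely when $n - |A\cup B| \ge k$; since $n - |A\cup B| \ge n-(2k-1) \ge k$ under $n \ge 3k-1$, every non-adjacent pair is at distance exactly $2$. Because adjacent pairs are always $\cX$-visible via their edge, $\cX$ is a total mutual-visibility set if and only if every pair $A,B$ with $A \ne B$ and $A\cap B \ne \emptyset$ has a common neighbour lying in $\overline{\cX}$. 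The key translation is then that a common neighbour of $A,B$ in $\overline{\cX}$ is exactly a $k$-set $C \subseteq [n]\setminus(A\cup B)$ that is an edge of $\cF(\overline{\cX})$.

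For the direction ``$\tau(\cF(\overline{\cX})) \le 2k-1 \Rightarrow \cX$ not a total mutual-visibility set'', I would take a transversal $T$ of $\cF(\overline{\cX})$ with $|T| \le 2k-1$, extend it to a set $U \supseteq T$ of size exactly $2k-1$ (possible since $n \ge 2k-1$), and split $U$ into two $k$-sets $A,B$ sharing a single element, so that $A \cup B = U$ and $A\cap B \neq \emptyset$. Every common neighbour $C$ of $A,B$ then satisfies $C \subseteq [n]\setminus U$, hence $C\cap T = \emptyset$; since $T$ is a transversal, $C$ cannot be an edge of $\cF(\overline{\cX})$, i.e.\ $C \in \cX$. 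As $n \ge 3k-1$ guarantees that at least one such $C$ exists, the pair $A,B$ is at distance $2$ and has no common neighbour in $\overline{\cX}$, so it is not $\cX$-visible.

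For the converse, ``$\cX$ not a total mutual-visibility set $\Rightarrow \tau(\cF(\overline{\cX})) \le 2k-1$'', I would pick a pair $A,B$ that is not $\cX$-visible; by the diameter argument it must be a distance-$2$ pair (so $A\cap B\ne\emptyset$) all of whose common neighbours lie in $\cX$. Then no $k$-set disjoint from $A\cup B$ is an edge of $\cF(\overline{\cX})$, which says exactly that $A\cup B$ is a transversal of $\cF(\overline{\cX})$; as $|A\cup B| = 2k - |A\cap B| \le 2k-1$, we get $\tau(\cF(\overline{\cX})) \le 2k-1$. Combining the two contrapositives yields the stated equivalence. The step I expect to be most delicate is the existence of the common neighbour $C$ in the first direction: this is exactly where the bound $n \ge 3k-1$ is used, and it is tight, since for $|A\cup B| = 2k-1$ one needs precisely $n \ge 3k-1$ to fit a disjoint $k$-set, so the whole argument hinges on matching the construction's $A\cup B$ of size $2k-1$ against the hypothesis.
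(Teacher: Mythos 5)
Your proposal is correct and follows essentially the same route as the paper: reduce to the diameter-$2$ situation, identify common neighbours in $\overline{\cX}$ with edges of $\cF(\overline{\cX})$ disjoint from $A\cup B$, and use the same construction of two $k$-sets sharing one element whose union is a $(2k-1)$-element transversal. The only cosmetic differences are that you derive $\diam(KG(n,k))=2$ directly rather than citing it, and you phrase both implications as contrapositives.
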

\begin{proof}
If $n \ge 3k-1$, then  $\diam(KG(n,k))=2$ (see~\cite{valencia-2005}).	
Suppose first that $\tau(\cF(\overline{\cX}))\geq 2k$ and consider two arbitrary vertices $A,B$ from the Kneser graph $G=KG(n,k)$. If $A$ and $B$ are adjacent, they are $\cX$-visible. Otherwise $\dist(A,B) = 2$, and the $k$-sets $A$ and $B$ are intersecting. Thus $|A\cup B| \leq 2k-1 <\tau(\cF(\overline{\cX})) $, and the set $A \cup B$ cannot intersect all  edges of $\cF(\overline\cX)$. Let us choose such an edge $Y$ with $Y\cap (A \cup B) = \emptyset$ from $\cF(\overline\cX)$. The corresponding vertex $Y$ belongs to $\overline{\cX}$ in $G$. As we have $Y\cap A = Y \cap B = \emptyset$, the vertex $Y$ is a common neighbor of $A$ and $B$. We conclude that $A$ and $B$ are $\cX$-visible, and hence, $\cX$ is a total mutual-visibility set in $G$. 

To prove the other direction, we suppose that $\tau(\cF(\overline{\cX}))\leq 2k-1$. Let us choose $2k-1$ elements, say $a_1,\dots, a_{k-1}, b_1,\dots, b_{k-1}, c,$ from the ground set $[n]$ such that they together intersect all edges of $\cF(\overline{\cX})$. Consider the two nonadjacent vertices of $G$ corresponding to the sets $A=\{a_1,\dots, a_{k-1},c\}$ and $B=\{b_1,\dots, b_{k-1},c\}$. Since $A \cup B$ intersects every $k$-edge from $\cF(\overline{\cX})$, the vertices  $A$ and $B$ in $G$ have no common neighbor from $\overline{\cX}$. As $\dist(A,B) = 2$ in $G$, we may conclude that $A$ and $B$ are not $\cX$-visible and, therefore $\cX$ is not a total mutual-visibility set.
\end{proof}



\begin{theorem} \label{thm:mut-Kneser}
	If $n$ and $k$ are integers such that $n\geq 2k+1$ and $k \ge 2$, then 
	\[\mut(KG(n,k)) = \left\{
	\begin{array}{ll}
		0 & \emph{if }2k+1 \leq n \leq 3k-1, \\
		{n \choose k} - C^*(n, k) & \emph{if } 3k\leq n \leq 2k^2-1,  \\
		{n \choose k} - 2k & \emph{if } 2k^2\leq n.  \\

	\end{array} 
	\right.\]
\end{theorem}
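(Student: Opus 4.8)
The strategy is to reduce the computation of $\mut(KG(n,k))$ to the covering-number analysis already prepared in Lemma~\ref{thm:mut-transv} and Lemma~\ref{lem:C*}, treating the three ranges of $n$ separately. By Lemma~\ref{thm:mut-transv}, for every $n \ge 3k-1$ the total mutual-visibility sets $\cX$ are exactly the complements of vertex sets $\overline{\cX}$ whose underlying hypergraph $\cF(\overline{\cX})$ has transversal number at least $2k$. Maximizing $|\cX|$ is therefore equivalent to minimizing $|\overline{\cX}|$ over all $k$-uniform hypergraphs on $[n]$ with $\tau \ge 2k$; and that minimum is precisely $C^*(n,k) = C(n,n-k,2k-1)$ by the definition recalled just before Lemma~\ref{lem:C*}. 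Hence for all $n \ge 3k$ one obtains the clean identity
\[
\mut(KG(n,k)) = {n \choose k} - C^*(n,k),
\]
which already covers the two nontrivial ranges simultaneously, pending the evaluation of $C^*(n,k)$.

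First I would dispatch the top range. For $n \ge 2k^2$, Lemma~\ref{lem:C*}$(i)$ gives $C^*(n,k) = 2k$, so the formula collapses to ${n \choose k} - 2k$, matching the third line. For the middle range $3k \le n \le 2k^2 - 1$, the quantity $C^*(n,k)$ is by definition the covering number $C(n,n-k,2k-1)$, so the second line of the theorem is literally the specialization of the displayed identity and requires no further computation; this is the whole point of stating the answer in terms of $C^*(n,k)$ there. The only remaining work is the bottom range, the trivial-looking claim $\mut(KG(n,k)) = 0$ for $2k+1 \le n \le 3k-1$.

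For the range $2k+1 \le n \le 3k-1$ I would argue directly, since Lemma~\ref{thm:mut-transv} only applies once $n \ge 3k-1$. The natural approach is to show that no two vertices can simultaneously see each other, i.e.\ that $\mut = 0$ forces every admissible $\cX$ to have size at most a constant, which here must be $0$ or $1$. Two nonadjacent $k$-sets $A,B$ are intersecting, and in this low range the diameter is still $2$ but the ground set is so small that $|A \cup B| \le 2k-1$ can fail to leave room for a disjoint third $k$-set to serve as a common neighbor inside $\overline{\cX}$; the key numerical input will be that for $n \le 3k-1$ a $k$-set disjoint from $A \cup B$ needs $n - |A\cup B| \ge k$ free elements, and the binomial inequality of Lemma~\ref{lem:binom} (applicable since $k < n < 2k$ governs related counts) should be used to show that any two chosen vertices block each other's shortest paths once $\cX$ has at least two elements. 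The main obstacle is this bottom range: the transversal reformulation is unavailable, so I would need a short self-contained counting argument showing that placing even two vertices in $\cX$ destroys visibility for some pair, whereas the remaining ranges follow essentially for free from the already-established lemmas.
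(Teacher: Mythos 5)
Your treatment of the ranges $n\ge 3k$ is correct and is exactly the paper's argument: Lemma~\ref{thm:mut-transv} converts the problem into minimizing $|\overline{\cX}|$ subject to $\tau(\cF(\overline{\cX}))\ge 2k$, which is $C^*(n,k)$ by definition, and Lemma~\ref{lem:C*}$(i)$ evaluates this as $2k$ when $n\ge 2k^2$. The genuine gap is the range $2k+1\le n\le 3k-1$, which you yourself flag as "the main obstacle": your sketch there does not contain the needed construction and contains two errors. First, you claim the diameter is still $2$ in this range; by \cite{valencia-2005} the diameter of $KG(n,k)$ exceeds $2$ whenever $n\le 3k-2$, so any argument leaning on that fact fails. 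Second, you aim to show that "placing even two vertices in $\cX$ destroys visibility," which would only yield $\mut\le 1$; since a total mutual-visibility set must leave \emph{every} pair of vertices of $G$ visible (not merely pairs inside $\cX$), the correct target is that a \emph{single} vertex in $\cX$ already blocks some pair, and your framing ("size at most $0$ or $1$") suggests a conflation of $\mut$ with $\mu$. The appeal to Lemma~\ref{lem:binom} is also misplaced here; that lemma is used elsewhere in the paper (in the degree-averaging step of Theorem~\ref{thm:Mu-Kneser}) and plays no role in this range.

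The missing idea is short: if $A\in\cX$, then $\overline{A}=[n]\setminus A$ has $k+1\le|\overline{A}|\le 2k-1$ elements, so one can write $\overline{A}=B\cup C$ with $B,C$ intersecting $k$-sets. Any common neighbor of $B$ and $C$ in $KG(n,k)$ is a $k$-set disjoint from $B\cup C=\overline{A}$, hence equal to $A$; thus $BAC$ is the unique shortest $B,C$-path (in particular $\dist(B,C)=2$ regardless of the diameter), and it passes through $A\in\cX$, so $B$ and $C$ are not $\cX$-visible. This forces $\cX=\emptyset$ and hence $\mut(KG(n,k))=0$ in that range. Without this (or an equivalent) construction, your proof of the first case is incomplete.
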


\begin{proof}
First, suppose that $2k+1 \leq n \leq 3k-1$ and a total mutual-visibility set $\cX$ in $G=KG(n,k)$ contains at least one vertex $A$. Let $\overline{A}$ denote the $(n-k)$-element set $[n] \setminus A$. As $k+1 \leq |\overline{A}| \leq 2k-1$, we can always find two $k$-element subsets $B$ and $C$ such that $B\cap C \neq \emptyset$ and $B\cup C=\overline{A}$. The corresponding vertices $B$ and $C$ are nonadjacent in $G$, and only vertex $A$ is adjacent to both. Consequently, $BAC$ is the unique shortest $B,C$-path in $G$. Since $A \in \cX$, the vertices $B$ and $C$ are not $\cX$-visible, and $\cX$ is not a total mutual visibility set in $G$. This contradiction proves $\mut(G)=0$ for the range $2k+1 \leq n \leq 3k-1$.
\medskip

Now, suppose that $n \ge 3k$. By Lemma~\ref{thm:mut-transv},  $\tau \ge 2k$ is exactly the property expected from the $k$-graph $\cF(\overline{\cX})$ to ensure that $\cX$ is a total mutual-visibility set. Thus $C^*(n,k)$ is the minimum number of $k$-edges in $\cF(\overline{\cX})$ that is the minimum number of vertices in $\overline{\cX}$ when $\cX$ is a total mutual-visibility set in $G$. Therefore $\mut(G)={n \choose k} - C^*(n,k)$, if $n \ge 3k$.

\medskip

 As $2k^2 \ge 3k$, the formula $\mut(G)={n \choose k} - C^*(n,k)$ remains valid  under the condition $n \ge 2k^2$. Moreover, in this case  we have $ C^*(n,k)=2k$ according to Lemma~\ref{lem:C*} (i). This finishes the proof of the theorem.
\end{proof}

\section{Mutual-visibility number of Kneser graphs} \label{sec:3}

Here we study the mutual-visibility number of Kneser graphs. Theorem~\ref{thm:Mu-Kneser} shows that $\mu (KG(n,k))$ equals $\mut KG(n,k)$ if $n \ge 7k-5$, while Proposition~\ref{prop:Kneser} states a lower bound for the range $2.5k \le n \le 7k-4$.

\begin{theorem} \label{thm:Mu-Kneser}
If $n$ and $k$ are two integers such that  $n \ge 7k-5$ and $k \ge 2$, then $$\mu(KG(n,k))={n \choose k} -C^*(n,k).$$
In particular, if $n \ge 2k^2$ and $k \ge 2$, then $$\mu(KG(n,k))={n \choose k} -2k.$$
\end{theorem}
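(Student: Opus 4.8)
The plan is to prove the two inequalities $\mu(KG(n,k)) \ge \binom{n}{k} - C^*(n,k)$ and $\mu(KG(n,k)) \le \binom{n}{k} - C^*(n,k)$ separately. The first is immediate: since $\mu(G) \ge \mut(G)$ holds for every connected graph and since $n \ge 7k-5 \ge 3k$ (using $k \ge 2$), Theorem~\ref{thm:mut-Kneser} gives $\mu(KG(n,k)) \ge \mut(KG(n,k)) = \binom{n}{k} - C^*(n,k)$. The ``in particular'' statement then follows by substituting $C^*(n,k) = 2k$ from Lemma~\ref{lem:C*}$(i)$, valid whenever $n \ge 2k^2$. Hence everything reduces to the upper bound.

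For the upper bound I would argue by contradiction. Suppose some mutual-visibility set $\cX$ satisfies $|\overline{\cX}| < C^*(n,k)$, and put $\cH = \cF(\overline{\cX})$, so that $\cH$ is a $k$-graph on $[n]$ with $|E(\cH)| < C^*(n,k)$. By the very definition of $C^*(n,k)$ as the minimum number of edges of a $k$-graph of order $n$ with transversal number $2k$, this forces $\tau(\cH) \le 2k-1$. Recall from the proof of Lemma~\ref{thm:mut-transv} that $\diam(KG(n,k)) = 2$, so two intersecting $k$-sets $A, B \in \cX$ fail to be $\cX$-visible exactly when $A \cup B$ meets every edge of $\cH$, i.e.\ when $A \cup B$ is a transversal of $\cH$. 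Thus it suffices to exhibit two intersecting sets $A, B \in \cX$ whose union is a transversal of $\cH$; this contradicts the assumption that $\cX$ is a mutual-visibility set.

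To build such a pair, fix a minimum transversal $T$ of $\cH$ with $|T| = \tau \le 2k-1$, and extend it to a $(2k-1)$-element set $W \supseteq T$; every such $W$ is again a transversal. Writing $W = A \cup B$ with $|A| = |B| = k$ forces $|A \cap B| = 1$, so $A, B$ are intersecting and $A \cup B = W$ is a transversal. There are exactly $\frac{k}{2}\binom{2k-1}{k}$ such unordered splits, and each $k$-subset of $W$ occurs in exactly $k$ of them. Consequently, if $b$ denotes the number of edges of $\cH$ lying inside $W$, then at most $bk$ splits have a part belonging to $\cH$, so whenever $b < \frac12\binom{2k-1}{k}$ there is a split with $A, B \in \cX$, yielding the contradiction. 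For $k \ge 4$ this holds for \emph{any} $W$, because Lemma~\ref{lem:C*}$(ii)$ together with the identity $\binom{2k-1}{k} = \frac{2(2k-1)}{k-2}\binom{2k-3}{k}$ gives
\[
b \le |E(\cH)| < C^*(n,k) \le 2\binom{2k-3}{k} + 6 < \tfrac12\binom{2k-1}{k}.
\]
For $k \in \{2,3\}$, where $C^*(n,k)$ is only a constant factor below $\frac12\binom{2k-1}{k}$, I would instead choose $W$ carefully: each vertex of a minimum transversal $T$ carries a \emph{private} edge whose remaining $k-1 \ge 1$ vertices lie outside $T$, so by adjoining the $2k-1-\tau$ new vertices of $W$ so as not to complete these (few) edges, which is possible because $|E(\cH)|$ is small and $n \ge 7k-5$ leaves ample room, one keeps $b$ strictly below $\frac12\binom{2k-1}{k}$.

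The main obstacle is precisely this last bound on $b$ for the small values $k = 2, 3$: there the crude estimate $b \le |E(\cH)|$ just barely fails, and one must combine the extremal structure of a minimum transversal (its private edges protrude from $W$ and are therefore never counted in $b$) with the freedom, guaranteed by $n \ge 7k-5$, to place the remaining vertices of $W$ on an independent-like set of $\cH$. When $\tau \ge k$ this already yields $b \le |E(\cH)| - \tau$, which suffices; when $\tau < k$ no edge fits inside $T$ at all, and the $2k-1-\tau \ge k$ adjoined vertices can be chosen to complete almost no edge. This is exactly where the hypothesis $n \ge 7k-5$ and the explicit bounds on $C^*(n,k)$ from Lemma~\ref{lem:C*} enter; everywhere else the argument is uniform in $k$.
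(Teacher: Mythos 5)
Your overall strategy coincides with the paper's: reduce to the upper bound, assume $|\overline{\cX}|<C^*(n,k)$ so that $\tau(\cF(\overline{\cX}))\le 2k-1$, and derive a contradiction by exhibiting two intersecting non-edges $A,B$ of $\cF(\overline{\cX})$ whose union is a $(2k-1)$-element transversal. Your double count over all splits of a $(2k-1)$-element transversal $W$ is sound; the threshold $b<\frac12\binom{2k-1}{k}$ is in fact exactly the condition the paper reaches by averaging degrees and fixing a minimum-degree vertex $z\in T$. For $k\ge 4$ your crude bound $b\le|E(\cF(\overline{\cX}))|\le 2\binom{2k-3}{k}+5<\frac{2k-1}{k-2}\binom{2k-3}{k}=\frac12\binom{2k-1}{k}$ does close the argument, and does so more cleanly than the paper, which instead runs a case analysis on $\tau$ (private edges, isolated vertices) in order to subtract $5$ or $6$ from the edge count.

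Two genuine gaps remain. First, the ``in particular'' clause does not follow from the first clause for $KG(8,2)$: for $k=2$ one has $2k^2=8<9=7k-5$, so $n=8$ satisfies the hypothesis of the second statement but not of the first. The paper handles $KG(8,2)$ by a separate short argument, and you must as well. Second, the cases $k\in\{2,3\}$ are only sketched, and the assertion that $\tau\ge k$ ``already yields $b\le|E(\cH)|-\tau$'' is not correct as written: the private edges of a minimum transversal $T$ protrude from $T$, but once you enlarge $T$ to $W$ they may fall entirely inside $W$, so the bound holds only if the $2k-1-\tau$ added vertices are chosen to avoid completing them (and the other edges). You acknowledge that such a choice is needed, and it is indeed available because each edge forbids at most one small set of candidates while $n\ge 7k-5$ leaves many, but this verification --- together with the subcase $\tau<k$, where isolated vertices need not exist --- has to be carried out explicitly; as it stands the small-$k$ part is a plausible plan, not a proof. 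For comparison, the paper settles these cases by using the $2k-1$ private edges when $\tau=2k-1$, by using an isolated vertex together with a $(2k-2)$-element transversal avoiding it when one exists, and otherwise by observing that at least six edges must leave any $(2k-1)$-element transversal because more than $5(k-1)$ non-isolated vertices lie outside it.
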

\begin{proof}
	Since $\mu(KG(n,k)) \geq \mut (KG(n,k)) $ holds by definitions, Theorem~\ref{thm:mut-Kneser} directly implies $\mu(KG(n,k)) \geq {n \choose k} - C^*(n,k)$ when $n \ge 7k-5 > 3k$. Therefore, it is enough to prove that $ \mu(KG(n,k))  \le {n \choose k} - C^*(n,k)$ when $n \ge 7k-5$. 
	\medskip
	
	Let $k$ and $n$ be integers so that $n \ge 7k-5$ and denote the graph $KG(n,k)$ by $G$. Recall that $\diam(G)=2$ under the given conditions. Suppose for a contradiction that $\cX $ is a mutual-visibility set in $G$ and $|\overline{\cX}| \leq C^*(n,k)-1$. It follows that $\tau(\cF(\overline{\cX})) \le 2k-1$. By Lemma~\ref{lem:C*} (ii), the number of $k$-edges in $\cF(\overline{\cX})$, that is denoted by $m$ and equals $|\overline{\cX}|$, satisfies 
	$$m \le 2\, {2k-3 \choose k} +5.$$
	 Our aim is to find two $k$-element subsets $A$ and $B$ of $[n]$ such that 
	\begin{itemize}
	\item[$(\star)$] $A,B \notin E(\cF (\overline{\cX}))$, $A$ intersects $B$, and $A \cup B$ is a transversal in $\cF (\overline{\cX})$.
	\end{itemize}
	 Then, in the graph $G$, the vertices $A$ and $B$ belong to $\cX$ and have no common neighbor in $\overline{\cX}$. As $A$ and $B$ are nonadjacent and $\diam(G)=2$, there is no shortest $A,B$-path to ensure the mutual-visibility. This will give the contradiction and proves that $|\overline{\cX}| \ge C^*(n,k)$ when $\cX$ is a mutual-visibility set in $G$. 
		We consider three cases in the continuation.
	\paragraph{Case 1.} $k\ge 3$ and $\tau(\cF(\overline{\cX})) =2k-1$.\\
	Let $T$ be a minimum transversal in $\cF(\overline{\cX})$ and denote by $\cT$ the subhypergraph induced by $T$ in $\cF(\overline{\cX})$ . Then $|T|=2k-1$ and every vertex $u \in T$ is incident to a `private' edge $e_u$ from $\cF(\overline{\cX})$ such that the intersection $e_u \cap T$ contains only the vertex $u$. For the number of hyperedges in $\cT$, which is denoted by $m_\cT$, we therefore have
	$$m_\cT \le m-(2k-1) \le m-5 \le 2\, {2k-3 \choose k}.$$
	To estimate the average vertex degree $\overline{d}_\cT$ in the induced subhypergraph $\cT$, we apply Lemma~\ref{lem:binom}:
	\begin{eqnarray} \label{eq:average}
	\nonumber &\overline{d}_\cT = \displaystyle \frac{k\, m_\cT}{2k-1} &\leq 2\, {2k-3 \choose k}\, \frac{k}{2k-1}\\
	\nonumber & &< \frac{1}{2} \, {2k-1 \choose k} \, \frac{k}{2k-1}\\
	& & = \frac{1}{2} \, {2k-2 \choose k-1}.
	\end{eqnarray} 
	The upper bound implies that there exists a vertex $z\in T$ being incident to less than $\frac{1}{2} \, {2k-2 \choose k-1}$ hyperedges in $\cT$. Consider now the pairs $\{A', B'\}$ where $A'$ and $B'$ are disjoint $(k-1)$-element subsets of $T \setminus \{z\}$. Every $(k-1)$-element subset appers in exactly one such pair and there are $ \frac{1}{2} {2k-2 \choose k-1}$ pairs. By the pigeonhole principle, there exists a pair $\{A', B'\}$ such that neither $A=A'\cup \{z\}$ nor $B=B'\cup \{z\}$ is an edge in $\cT$. Consequently, the $k$-sets $A$ and $B$ are not edges in $\cF(\overline{\cX})$, they intersect in $z$, and the union $T=A \cup B$ is a transversal in $\cF(\overline{\cX})$. We may infer that $A$ and $B$ satisfy $(\star)$ and the corresponding vertices $A,B \in \cX$ are not $\cX$-visible in $G$. This contradiction proves $ \mu(KG(n,k))  \leq {n \choose k} - C^*(n,k)$.
	\paragraph{Case 2.} $k\ge 3$ and $\tau(\cF(\overline{\cX})) \le 2k-2$.\\
	First, suppose that there is an isolated vertex $z$ in $\cF(\overline{\cX})$. 	Then there exists a (not necessarily minimum) transversal $T$ in $\cF(\overline{\cX})$ such that $|T|=2k-2$ and $z \notin T$. For an arbitrary $(k-1)$-element subset $A'$ of $T$ and for the complement $B'= T \setminus A'$, the sets $A= A' \cup \{z\}$ and $B= B' \cup \{z\}$ satisfy the three properties in $(\star)$ and give the required contradiction.\\
	In the other case, there is no isolated vertex in $\cF(\overline{\cX}) $. Let $T$ be an arbitrary $(2k-1)$-element transversal of this hypergraph and denote by $m_\cT$ the number of hyperedges in the subhypergraph $\cT$ induced by $T$.  For the 
	$$n-|T| \ge (7k-5)-(2k-1)=5k-4 >5(k-1) $$
	 non-isolated vertices outside $T$, we need at least $6$ hyperedges to cover them all in $\cF(\overline{\cX})$. It implies $m_\cT \le m-6 < 2 {2k-3 \choose k}$. In the continuation, we can proceed as in Case~1; that is,  we estimate the average degree in $\cT$ according to $(\ref{eq:average}) $ and choose a vertex $z \in T$ of minimum degree. Considering the pairs of disjoint $(k-1)$-element subsets of $T\setminus \{z\}$, we may conclude that there exist $k$-sets $A$ and $B$ satisfying $(\star)$. The contradiction proves $ \mu(KG(n,k))  \leq {n \choose k} - C^*(n,k)$. 
	\paragraph{Case 3.} $k= 2$.\\
	 If $k =2$, the condition $n \ge 7k-5$ gives $n \ge 9$ and implies $n > 2k^2=8$. Hence, by Lemma~\ref{lem:C*} (i), $C^*(n,2)=4$ also holds. By our supposition, $\cX$ is a mutual visibility set in $G$ and $\cF(\overline{\cX})$ contains $m \le C^*(n,2)-1 =3$ edges. As $n \ge 9$, there exist isolated vertices. If $\tau(\cF(\overline{\cX})) =3$, then we have three vertex-disjoint hyperedges in $\cF(\overline{\cX}) $ and, for a minimum transversal $T$, it is easy to choose two 2-element sets $A \subset T$ and $B \subset T$ satisfying $(\star)$. If $\tau(\cF(\overline{\cX})) \le 2$, we take a $2$-element transversal $\{a,b\}$ and a third vertex $c$ that is isolated in $\cF(\overline{\cX})$. The sets $A= \{a,c\}$, $B=\{b,c\}$ satisfy $(\star)$. This gives the desired contradiction and proves the formula for $k=2$.
	 \bigskip
	 
	 If $k \ge 3$ and $n \ge 2k^2$, then $n \ge 7k-5$ also holds. Thus the formula $ \mu(KG(n,k)) = {n \choose k} - C^*(n,k)$ and Lemma~\ref{lem:C*} (i) directly imply $ \mu(KG(n,k)) = {n \choose k} - 2k$. The same is true if $k=2$ and $n \ge 9$. The only remaining case is the graph $KG(8,2)$. Here, a mutual-visibility set $\cX$ with more than ${8 \choose 2}- 4$ vertices defines the $2$-uniform underlying hypergraph $\cF(\overline{\cX})$ with $n =8$ vertices and at most three edges. As in Case~3 of the proof, it is easy to show two non-edges in  $\cF(\overline{\cX})$ that satisfy $(\star)$ and prove that $\cX$ is not a mutual visibility set in $KG(8,2)$.
\end{proof}

The mutual-visibility notion is related to the general position problem in graphs. A subset $S$ of vertices in a connected graph $G$ is a \textit{general position set} if no triple of vertices from $S$ lie on a common shortest path in $G$. Equivalently, for every three different vertices $u,v,z \in S$, the strict inequality $\dist(u,v)+\dist(v,z) > \dist(u,z)$ holds. The general position problem \cite{ghorbani2019, manuel} is to find a largest general position set of $G$, the order of such a set is the \textit{general position number} $\gp(G)$. By definitions, every general position set is a mutual visibility set and therefore, $\mu(G) \geq \gp(G)$ holds for every connected graph $G$. 
\begin{proposition} \label{prop:Kneser}
If $n$ and $k$ are integers satisfying $2.5 k-0.5 \leq n \leq 7k-5$ and $k \ge 2$, then
$$\mu(KG(n,k)) \ge {n-1 \choose k-1}.$$
\end{proposition}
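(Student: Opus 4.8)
The plan is to exhibit a large \emph{general position} set and then invoke the inequality $\mu(G) \ge \gp(G)$ recorded just above the proposition. The natural candidate is the ``star'' $\cS = \{A \in V(KG(n,k)) : 1 \in A\}$, i.e.\ all $k$-subsets of $[n]$ containing the fixed element $1$. There are exactly $\binom{n-1}{k-1}$ of these, so it suffices to prove that $\cS$ is in general position; then $\gp(KG(n,k)) \ge |\cS| = \binom{n-1}{k-1}$ and the bound on $\mu$ follows at once.

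The first step is to control the diameter. By the distance formula of Valencia-Pabon and Vera~\cite{valencia-2005}, one has $\diam(KG(n,k)) = \lceil \tfrac{k-1}{n-2k}\rceil + 1$, and since $n \ge 2.5k-0.5$ forces $n-2k > 0$ and $k-1>0$, the hypothesis $n \ge 2.5k-0.5$ is \emph{exactly} equivalent to $\tfrac{k-1}{n-2k} \le 2$, hence to $\diam(KG(n,k)) \le 3$. This is the technical heart of the argument: every geodesic between two vertices of $\cS$ has length at most $3$.

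The second step exploits this to verify the general position property. Let $A, B \in \cS$ and let $A = A_0, A_1, \dots, A_\ell = B$ be any geodesic, so $\ell \le 3$ and consecutive sets are disjoint. The only possible internal vertices are $A_1$ and, when $\ell = 3$, also $A_2 = A_{\ell-1}$. Now $A_1 \cap A = \emptyset$ together with $1 \in A$ forces $1 \notin A_1$; likewise $A_{\ell-1} \cap B = \emptyset$ together with $1 \in B$ forces $1 \notin A_{\ell-1}$. Thus no internal vertex of any geodesic joining two elements of $\cS$ lies in $\cS$; equivalently, no element of $\cS$ lies strictly between two others on a common shortest path. This is precisely general position, completing the proof.

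The single step requiring care is the diameter computation, which is also the only place the exact lower bound $2.5k-0.5$ enters; once $\diam \le 3$ is secured, the combinatorics is immediate, since the disjointness of consecutive sets on a short geodesic automatically pushes the fixed element $1$ out of every internal vertex. The upper restriction $n \le 7k-5$ plays no role in this argument — it merely delimits the range left open by the exact formula of Theorem~\ref{thm:Mu-Kneser}.
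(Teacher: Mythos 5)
Your proof is correct. It follows the same high-level route as the paper — bound $\mu(KG(n,k))$ from below by the general position number via $\mu(G)\ge\gp(G)$ — but where the paper simply cites the results of Ghorbani et al.\ and Patk\'os giving the exact value $\gp(KG(n,k))={n-1\choose k-1}$ for $n\ge 2.5k-0.5$, you prove the needed lower bound $\gp(KG(n,k))\ge{n-1\choose k-1}$ from scratch. Your two ingredients are sound: the Valencia-Pabon--Vera formula $\diam(KG(n,k))=\lceil\frac{k-1}{n-2k}\rceil+1$ does make the hypothesis $n\ge 2.5k-0.5$ equivalent to $\diam\le 3$, and once every geodesic between two star vertices has at most two internal vertices, the disjointness of consecutive sets on a path in a Kneser graph forces each internal vertex to be disjoint from one of the two endpoints and hence to miss the fixed element $1$, so the star $\{A: 1\in A\}$ is indeed a general position set. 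What your version buys is self-containedness (only the one-sided inequality is needed, and only the diameter formula is imported); what the paper's version buys is brevity and the additional information that the star is in fact extremal for $\gp$ in this range. Your closing observation that the upper restriction $n\le 7k-5$ is irrelevant to the bound itself is also accurate — it merely marks where Theorem~\ref{thm:Mu-Kneser} takes over with an exact value.
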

\begin{proof}
The results from~\cite{ghorbani2019} and \cite{Patkos2020} yield that $\gp(KG(n,k))= {n-1 \choose k-1}$ holds when $n \ge 2.5 k-0.5$. Then, the relation $\mu(KG(n,k)) \ge \gp(KG(n,k))$ directly implies the statement.
\end{proof}

\section{Bipartite Kneser graphs} \label{sec:4}
In this section we study the total mutual-visibility number and mutual-visibility number of bipartite Kneser graphs. Since a $k$-element subset $S_1 \subseteq [n]$ is a subset of an $(n-k)$-element subset $S_2 \subseteq [n]$ if and only if $[n]\setminus  S_2 \subseteq [n] \setminus S_1$, bipartite Kneser graphs satisfy the following property:
\begin{observation} \label{obs:sym} 
Let $G$ be the bipartite Kneser graph $H(n,k)$ such that the vertices in the partite classes $\cP_1$ and $\cP_2$ represent the $k$-element and $(n-k)$-element subsets of $[n]$, respectively. Let $\phi$ be the function that assigns to each vertex $S \in V(H(n,k))$ the vertex $S^c$ which represents the complement set $S^c=[n] \setminus S$. Then, $\phi$ is an automorhism of $H(n,k)$ that maps $\cP_1$ into $\cP_2$. 
\end{observation}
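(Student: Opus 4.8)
The statement to prove is Observation~\ref{obs:sym}, which asserts that the complementation map $\phi(S) = S^c = [n]\setminus S$ is an automorphism of the bipartite Kneser graph $H(n,k)$ swapping the two partite classes. This is a structural verification rather than a deep result, so my plan is to check the three defining properties of a class-swapping automorphism directly: that $\phi$ is a well-defined involution on the vertex set, that it maps $\cP_1$ to $\cP_2$ (and vice versa), and that it preserves adjacency in both directions.

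First I would verify that $\phi$ maps $V(H(n,k))$ to itself and swaps the partite classes. If $S \in \cP_1$ is a $k$-element subset of $[n]$, then $S^c$ has exactly $n-k$ elements, so $\phi(S) \in \cP_2$; symmetrically, if $S \in \cP_2$ then $|S^c| = n - (n-k) = k$, so $\phi(S) \in \cP_1$. Thus $\phi$ is a bijection between $\cP_1$ and $\cP_2$. Since $(S^c)^c = S$, the map $\phi$ is its own inverse, hence an involution and in particular a bijection on $V(H(n,k))$.

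Next I would check that $\phi$ preserves adjacency, which is the only substantive point. The key combinatorial fact, already highlighted in the text preceding the observation, is the equivalence $S_1 \subseteq S_2 \iff S_2^c \subseteq S_1^c$ for any two subsets of $[n]$. Two vertices $u$ and $v$ of $H(n,k)$ are adjacent precisely when one of their represented sets is contained in the other. By the subset-reversal equivalence, $u \subseteq v$ holds if and only if $\phi(v) = v^c \subseteq u^c = \phi(u)$, so a containment relation between $u$ and $v$ is present exactly when the reversed containment holds between $\phi(u)$ and $\phi(v)$. Hence $u$ and $v$ are adjacent if and only if $\phi(u)$ and $\phi(v)$ are adjacent. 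Because $\phi$ is a bijection preserving adjacency in both directions, it is a graph automorphism.

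There is no real obstacle here; the only point requiring a moment's care is bookkeeping the cardinalities so that $\phi$ genuinely lands in the opposite partite class (using $n > 2k$, so that $k \ne n-k$ and the two classes are distinct), together with correctly invoking the containment-reversal equivalence rather than a na\"ive ``complement preserves containment'' statement. Since the containment definition of adjacency is symmetric in the roles of the two sets, no separate argument is needed for the two orientations of the subset relation.
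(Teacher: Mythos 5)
Your proof is correct and follows essentially the same route as the paper, which justifies the observation by the single sentence preceding it: $S_1 \subseteq S_2$ if and only if $[n]\setminus S_2 \subseteq [n]\setminus S_1$. You merely spell out the bijectivity and partite-class bookkeeping that the paper leaves implicit.
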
 
It follows from the observation that any property proved for $\cP_1$ holds for $\cP_2$, and vice versa. Remark that bipartite Kneser graphs satisfy the much stronger property of symmetry i.e., they are vertex- and edge-transitive graphs~\cite{simpson}.
\medskip

In the main result of this section, we will use again the constant $C(n,k,t)$ that is the minimum size of a covering design with the given parameters.

\begin{theorem} \label{thm:bipart}
	If $n$ and $k$ are integers such that $n\geq 2k+1$, then 
\[\mut(H(n,k)) = \left\{
\begin{array}{ll}
	0 & \emph{if }2k+1 \leq n \leq 3k, \\
  2\,	{n \choose k} - 2\, C(n, n-k, 2k) & \emph{if } 3k+1\leq n < 2k^2+k,  \\	
  2\,	{n \choose k} - 4k-2 & \emph{if } 2k^2+k\leq n.  
\end{array} 
\right.\]
\end{theorem}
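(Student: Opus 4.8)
The plan is to reduce the total mutual-visibility problem for $H(n,k)$ to a transversal condition on the underlying hypergraphs, in direct analogy with Lemma~\ref{thm:mut-transv} for Kneser graphs, and then to translate that condition into the covering number $C(n,n-k,2k)$. First I would establish the relevant distance structure. Since $H(n,k)$ is bipartite and vertex-transitive, I would compute $\diam(H(n,k))$ under the assumption $n \ge 2k+1$: a $k$-set and an $(n-k)$-set at the "extreme" ends of a containment chain should be at distance $3$, while two vertices in the same partite class are at even distance (at least $2$). I expect that when $n \ge 3k+1$ the diameter is exactly $3$, and that the relevant shortest-path obstructions occur between pairs of vertices inside a single partite class (distance $2$) and between pairs across the classes (distance $1$ or $3$). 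The automorphism $\phi$ of Observation~\ref{obs:sym} lets me treat $\cP_1$ and $\cP_2$ symmetrically, which is why the final formula carries a factor of $2$.

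Next I would prove the analogue of the characterization: for $n$ large enough, a set $\cX$ is a total mutual-visibility set if and only if the complement $\overline{\cX}$ misses "few enough" vertices in a precise sense governed by transversals. The key local computation is to identify, for two nonadjacent vertices $A,B$ in the same partite class $\cP_1$, exactly which vertices of the opposite class $\cP_2$ lie on shortest $A,B$-paths. For two $k$-sets $A,B$ at distance $2$, a common neighbor is an $(n-k)$-set $Y$ with $A \subseteq Y$ and $B \subseteq Y$, equivalently $A\cup B \subseteq Y$; such $Y$ exists in $\overline{\cX}$ unless every superset of $A\cup B$ lands in $\cX$. Reformulating via complements $Y^c$, the condition "$A$ and $B$ have a common neighbor in $\overline{\cX}\cap\cP_2$" becomes a statement that some $k$-set disjoint from $A\cup B$ appears as an edge of $\cF(\overline{\cX}\cap\cP_2)$ — i.e. that $A\cup B$ fails to be a transversal. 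Running this over all in-class pairs, I expect the clean characterization that $\cX$ is a total mutual-visibility set precisely when both $\cF(\overline{\cX}\cap\cP_1)$ and $\cF(\overline{\cX}\cap\cP_2)$ have transversal number at least $2k+1$, after also checking that cross-class pairs (distance $1$ or $3$) impose no stronger constraint.

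Granting the characterization, the counting is immediate: minimizing $|\overline{\cX}|$ means minimizing the number of edges in each of the two $k$-uniform hypergraphs subject to $\tau \ge 2k+1$, and by the West correspondence recalled in the introduction this minimum is exactly $C(n,n-k,2k)$ per class. Hence the maximum size of a total mutual-visibility set is $2\binom{n}{k} - 2\,C(n,n-k,2k)$, giving the middle range. For $n \ge 2k^2+k$, Lemma~\ref{lem:C*}$(iii)$ gives $C(n,n-k,2k)=2k+1$, which substitutes to yield $2\binom{n}{k}-4k-2$. The lowest range $2k+1 \le n \le 3k$ requires a separate direct argument, mirroring the Kneser proof: I would show that any single chosen vertex $A$ forces a pair $B,C$ in the opposite class whose unique shortest connection passes through $A$, so no nonempty total mutual-visibility set exists and $\mut(H(n,k))=0$.

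The main obstacle I anticipate is the distance-$3$ analysis across the partite classes, which has no counterpart in the diameter-$2$ Kneser setting. For a $k$-set $A$ and an $(n-k)$-set $B$ that are nonadjacent, shortest $A,B$-paths have length $3$ and their internal vertices form pairs $(Y,Z)$ with $A\subseteq Y$, $Z\subseteq B$ (or the symmetric configuration), so I must verify that the abundance of such intermediate vertices — guaranteed when $n$ is sufficiently larger than $k$ — prevents $\cX$ from ever blocking a cross-class pair once the in-class transversal conditions hold. Pinning down the exact threshold where the diameter stabilizes at $3$ and where cross-class obstructions become vacuous is the delicate point; I would handle it by an explicit counting argument showing that the number of available intermediate vertices dominates $|\overline{\cX}|$ in the regime $n \ge 3k+1$, so that only the in-class distance-$2$ constraints remain binding.
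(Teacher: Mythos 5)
Your overall route matches the paper's: the range $2k+1 \le n \le 3k$ is handled by the same argument (for any $A\in\cP_1$ choose $(n-k)$-sets $B_1,B_2$ with $B_1\cap B_2=A$, so $A$ is their unique common neighbour); the in-class distance-$2$ pairs translate, exactly as in the paper but in complementary language, into the requirement that $\overline{\cX}\cap\cP_2$ form an $(n,n-k,2k)$ covering design (equivalently, that the $k$-uniform hypergraph of complements have transversal number at least $2k+1$), and symmetrically for $\cP_1$ via Observation~\ref{obs:sym}; and the last range is Lemma~\ref{lem:C*}$(iii)$. All of this is sound and yields the upper bound $\mut(H(n,k))\le 2\binom{n}{k}-2\,C(n,n-k,2k)$.

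The genuine gap is the cross-class, distance-$3$ case, which you rightly flag as the delicate point but propose to close with a counting argument that points the wrong way. You would show that ``the number of available intermediate vertices dominates $|\overline{\cX}|$'' --- but the two internal vertices of the witnessing shortest path must lie \emph{inside} $\overline{\cX}$, which is a tiny set (of size $2\,C(n,n-k,2k)$, about $4k+2$ for large $n$), while $\cX$ is almost all of $V(H(n,k))$. An abundance of candidate internal vertices is therefore useless: generically they all land in $\cX$, and no cardinality comparison can place a full length-$3$ path inside $\overline{\cX}$. What is needed is the structure of $\overline{\cX}$, and the paper closes this by a two-step chaining that uses the in-class conditions twice: for nonadjacent $A\in\cP_1$, $B\in\cP_2$, first pick $B_1\in\overline{\cX}\cap\cP_2$ with $A\subseteq B_1$ (possible since $|A|=k$ is too small to be a transversal of the complement hypergraph of $\cB$), then pick $A_1\in\overline{\cX}\cap\cP_1$ with $A_1\subseteq B_1\cap B$ (possible since $|B_1^c\cup B^c|\le 2k<\tau(\cF(\overline{\cX}\cap\cP_1))$); then $AB_1A_1B$ is a shortest path with both internal vertices in $\overline{\cX}$. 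So the two in-class transversal conditions do imply cross-class visibility, but via this structural argument, not via counting. Without this step your ``if and only if'' characterization is only proved in the ``only if'' direction, and you get the upper bound on $\mut(H(n,k))$ without the matching construction.
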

\begin{proof}
Consider a bipartite Kneser graph $H(n,k)$ with parameters satisfying $n \ge 2k+1$, and let $\cP_1$ and $\cP_2$ denote the two partite classes of $H(n,k)$ such that $\cP_1$ consists of the vertices representing the $k$-element subsets of $[n]$.
Let $\cX$ be a total mutual visibility set in $H(n,k)$.
\medskip

We first assume that $n \le 3k$ and show that $\cX \cap \cP_1 =\cX \cap \cP_2 = \emptyset$. Note that $n \le 3k$ implies $k+ 2(n-2k) \leq n$ and then, for every vertex $A \in \cP_1$, we can choose two $(n-k)$-element sets $B_1 $ and $B_2$ such that $B_1 \cap B_2=A$. Vertex $A$ is clearly the only common neighbor of $B_1$ and $B_2$ in $H(n,k)$. The $\cX$-visibility of $B_1$ and $B_2$ therefore implies $A \notin \cX$. By Observation~\ref{obs:sym}, we infer that the same is true for the vertices in $\cP_2$ and therefore,  $\cX=\emptyset$. It proves  $\mut(H(n,k))=0$ for the case $n \leq 3k$.
\medskip

Assume now that $n \geq 3k+1$ and show that the family $\cP_2 \cap \overline{\cX}$ of $(n-k)$-element sets covers all $(2k)$-element subsets of $[n]$. Suppose to the contrary that no $B \in \cP_2 \cap \overline{\cX}$ covers the $(2k)$-element set $S \subseteq [n]$. Split $S$ into two disjoint $k$-sets $A_1$ and $A_2$. Since $n-k > 2k$, there exist $(n-k)$-element subsets of $[n]$ that cover both $A_1$ and $A_2$. However, by our supposition, none of the vertices corresponding to these covering sets belong to $\cP_2 \cap \overline{\cX}$. Equivalently, in $H(n,k)$, $\dist(A_1, A_2)=2$ but none of the common neighbors belong to $\overline{\cX}$. This contradicts the $\cX$-visibility of $A_1$ and $A_2$. We infer that the $(n-k)$-sets in $\cP_2 \cap \overline{\cX}$ cover all $(2k)$-subsets of $[n]$ and therefore, $|\cP_2 \cap \overline{\cX}|\ge C(n,n-k, 2k)$. By Observation~\ref{obs:sym}, $|\cP_1 \cap \overline{\cX}|\ge C(n,n-k, 2k)$ follows and we may conclude
$$\mut(H(n,k)) \le   2\,	{n \choose k} - 2\, C(n, n-k, 2k).$$

To prove the other direction, let $\cB$ be a family of $(n-k)$-element subsets of $[n]$ that cover all $(2k)$-element subsets. We also suppose that $\cB$ is minimum that is, $|\cB|= C(n, n-k, 2k)$ and use the same notation $\cB$ for the set of corresponding vertices in $\cP_2$. Moreover, let 
$$\cA = \{A \subseteq [n]: [n]\setminus A  \in \cB\}= \phi(\cB),$$
where $\phi$ is the automorphism from Observation~\ref{obs:sym}.
We are going to prove that the vertex set $\cX=\overline{\cA \cup \cB}$ forms a total mutual-visibility set in $H(n,k)$. 

Take two vertices $A_1$ and $A_2$ from the partite class $\cP_1$ of $H(n.k)$. Since $|A_1 \cup A_2| \leq 2k$, the union is covered by an $(n-k)$-element set  $B \in \cB$. The corresponding vertex  $B \in \overline{\cX}$ is a common neighbor of $A_1$ and $A_2$ and consequently $A_1$ and $A_2$ are $\cX$-visible. By Observation~\ref{obs:sym} and by the symmetry of our definitions for $\cA$ and $\cB$, the same is true for any two vertices $B_1, B_2 \in \cP_2$.

The last case to check is when $A \in \cP_1$ and $B \in \cP_2$. If they are adjacent, the $\cX$-visibility follows. If they are nonadjacent vertices, the distance is at least $3$ (in fact, it is exactly $3$). We may choose an arbitrary vertex $Y$ from $\cP_1$ that is different from $A$. As we have already seen, there exists a vertex $B_1 \in \cP_2 \cap \overline{\cX}$ which is a common neighbor of $A$ and $Y$. Similarly, there is a vertex $A_1$ in $  \cP_1 \cap \overline{\cX}$ which is a common neighbor of $B$ and $B_1$. We may infer that $AB_1A_1B$ is a shortest $A,B$-path and both internal vertices belong to $\overline{\cX}$. Therefore, $A$ and $B$ are $\overline{\cX}$-visible.  It finishes the proof for $\cX= \overline{\cA \cup \cB}$ being a total mutual-visibility set in $H(n,k)$. We may conclude that
$$\mut(H(n,k)) \ge |\cX| = 2\, {n \choose k}-|\cA \cup \cB| = 2\,	{n \choose k} - 2\, C(n, n-k, 2k).$$
This proves the desired formula for $n \ge 3k+1$.
\medskip

If $n \ge 2k^2+k$, the equality $\mut(H(n,k)) =2\,	{n \choose k} - 2\, C(n, n-k, 2k)$ still remains valid, and together with Lemma~\ref{lem:C*} $(iii)$ they yield the formula stated for the last case.
\end{proof}


Concerning the mutual-visibility number of bipartite Kneser graphs, we propose two simple lower bounds.
\begin{proposition} \label{prop:bipK}
If $n$ and $k$ are integers such that $n\geq 3k+1$, then
$$\mu(H(n,k)) \ge \max \left\{ {n \choose k}, \,  2\,	{n \choose k} - 2\, C(n, n-k, 2k) \right\}.$$
\end{proposition}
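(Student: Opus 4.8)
The plan is to establish the two quantities inside the maximum separately as lower bounds for $\mu(H(n,k))$, and then to combine them. The second bound requires essentially no new argument, so the only substantive step is producing an explicit mutual-visibility set of size ${n \choose k}$ for the first bound.

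For the term $2\,{n \choose k} - 2\,C(n,n-k,2k)$, I would simply invoke the general inequality $\mu(H(n,k)) \ge \mut(H(n,k))$ (valid for every connected graph by definition) together with Theorem~\ref{thm:bipart}. Since the proof of that theorem shows that $\mut(H(n,k)) = 2\,{n \choose k} - 2\,C(n,n-k,2k)$ holds throughout the range $n \ge 3k+1$ (the case $n \ge 2k^2+k$ being only a simplification of the same formula), this immediately gives $\mu(H(n,k)) \ge 2\,{n \choose k} - 2\,C(n,n-k,2k)$ with no further work.

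For the term ${n \choose k}$, the idea is to exhibit the whole partite class $\cP_1$ of all $k$-subsets of $[n]$ as a mutual-visibility set. I would set $\cX = \cP_1$ and check that any two distinct vertices $A_1, A_2 \in \cP_1$ are $\cX$-visible. As $A_1$ and $A_2$ lie in the same partite class of the bipartite graph $H(n,k)$, they are nonadjacent, so $\dist(A_1,A_2) \ge 2$. The key observation is that $|A_1 \cup A_2| \le 2k < n-k$, where the strict inequality uses the hypothesis $n \ge 3k+1$; consequently there is an $(n-k)$-set $B$ with $A_1 \cup A_2 \subseteq B$, and the corresponding vertex $B \in \cP_2$ is a common neighbor of $A_1$ and $A_2$. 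Hence $\dist(A_1,A_2)=2$ and $A_1 B A_2$ is a shortest $A_1,A_2$-path whose unique internal vertex $B$ belongs to $\cP_2$, and therefore lies outside $\cX = \cP_1$. This shows $A_1$ and $A_2$ are $\cX$-visible, so $\cP_1$ is a mutual-visibility set and $\mu(H(n,k)) \ge |\cP_1| = {n \choose k}$.

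Taking the larger of the two lower bounds yields the claimed inequality. I do not expect any serious obstacle: the single point requiring care is the distance computation, which depends precisely on the hypothesis $n \ge 3k+1$ to ensure that the union of any two $k$-sets still fits inside an $(n-k)$-set, thereby forcing distance exactly $2$ within a partite class.
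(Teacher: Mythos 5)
Your proposal is correct and follows essentially the same route as the paper: the second term comes from $\mu(H(n,k)) \ge \mut(H(n,k))$ together with Theorem~\ref{thm:bipart}, and the first from verifying that the partite class $\cP_1$ is a mutual-visibility set, with every pair of its vertices at distance~$2$ joined through a common neighbor in $\cP_2$. Your write-up merely spells out the distance computation (via $|A_1 \cup A_2| \le 2k < n-k$) in a bit more detail than the paper does.
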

\begin{proof}
By definition,  $\mu(G) \ge \mut(G)$ holds for every graph $G$. Theorem~\ref{thm:bipart} then implies $\mu(H(n,k)) \ge 2\,	{n \choose k} - 2\, C(n, n-k, 2k)$. 
The inequality $\mu(H(n,k)) \ge {n \choose k}$ follows from the fact that $\cX= \cP_1$ is a mutual-visibility set if $n \ge 3k+1$. Indeed, as $\dist(A_1,A_2)=2$ holds for every two vertices $A_1, A_2$ from $\cP_1= \cX$, each shortest path between them contains only one internal vertex, and this vertex is from $\cP_2=\overline{\cX}$. 
\end{proof}

\section{Johnson graphs} \label{sec:5}

In this section, we determine the total mutual-visibility number and estimate the mutual visibility number of Johnson graphs in terms of graph and hypergraph Tur\'{a}n numbers. Before stating the main results, we define the \textit{$k$-uniform suspensions}  of the $4$-cycle $C_4$ and the complete graph $K_4$ for every $k \ge 2$. The $k$-uniform suspensions are denoted by $\cC_4^{k+}$ and $\cK_4^{k+}$ respectively. The $(k+2)$-element vertex set for both of them is given as $Y \cup \{z_1,z_2,z_3,z_4\}$, and the two edge sets are the following:\footnote{Note that $\cC_4^{k+}$ corresponds to the complete $k$-partite $k$-graph $\cK_{1,\dots, 1, 2,2}^{(k)}$.}
\begin{eqnarray} \label{eq:1}
	E(\cC_4^{k+})&=&\{Y \cup \{z_i,z_{i+1}\}\}: i \in [3]\} \cup \{Y \cup \{z_4,z_1\}\}\\
	\label{eq:2}
	E(\cK_4^{k+})&=&\{Y \cup \{z_i,z_{j}\}\}: i,j \in [4], \enskip i< j\}.
\end{eqnarray}

The following tight asymptotic result was proved by Mubayi:
\begin{theorem}[\cite{mubayi-2002}] \label{mubayi} 
	For every $k \ge 2$,
	$$\ex_k(n, \cC_4^{k+})=(1+o(1)) \frac{1}{k!} \, n^{k-0.5}.
	$$
\end{theorem}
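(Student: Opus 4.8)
The plan is to reduce the hypergraph problem to the classical graph Tur\'an number $\ex(m,C_4)$ via the \emph{links} of $(k-2)$-sets. For a $k$-graph $\cH$ on $[n]$ and a set $Y\in\binom{[n]}{k-2}$, let the link $L_Y$ be the graph on $[n]\setminus Y$ in which $\{u,v\}$ is an edge exactly when $Y\cup\{u,v\}\in E(\cH)$. The guiding observation, which I would record first, is that the four edges of $\cC_4^{k+}$ have common intersection $Y$ and residual pairs forming a $4$-cycle; since isomorphisms preserve this intersection pattern, a copy of $\cC_4^{k+}$ in $\cH$ is precisely a choice of $Y$ together with a $C_4$ in $L_Y$. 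Hence \emph{$\cH$ is $\cC_4^{k+}$-free if and only if $L_Y$ is $C_4$-free for every $Y\in\binom{[n]}{k-2}$}. Both bounds rest on this equivalence.

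For the upper bound I would double count incidences between edges of $\cH$ and links. Each $e\in E(\cH)$ contributes one edge to $L_Y$ for each of the $\binom{k}{2}$ sets $Y\subseteq e$ with $|Y|=k-2$, so $\binom{k}{2}\,|E(\cH)|=\sum_{Y}e(L_Y)$. As each $L_Y$ lives on $n-k+2$ vertices and is $C_4$-free, the classical K\H{o}v\'ari--S\'os--Tur\'an/Reiman estimate $\ex(m,C_4)=(1+o(1))\tfrac12 m^{3/2}$ gives $e(L_Y)\le(1+o(1))\tfrac12 n^{3/2}$. Using $\binom{n}{k-2}=(1+o(1))\frac{n^{k-2}}{(k-2)!}$ and $k(k-1)(k-2)!=k!$, this yields
\[
|E(\cH)|\ \le\ \frac{1}{\binom{k}{2}}\binom{n}{k-2}(1+o(1))\tfrac12 n^{3/2}\ =\ (1+o(1))\frac{1}{k!}\,n^{k-\frac12},
\]
which is exactly the claimed upper bound.

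For the matching lower bound I would give an explicit algebraic construction whose every link is $C_4$-free. Identify $[n]$ with $\mathbb{Z}_n$ and fix a Sidon set $T\subseteq\mathbb{Z}_n$ (every nonzero difference has at most one representation as a difference of two elements of $T$), and set
\[
\cH=\Bigl\{\, e\in\tbinom{[n]}{k}:\ \sum_{x\in e}x\in T \,\Bigr\},
\]
with the sum computed in $\mathbb{Z}_n$. For any $Y$ the link $L_Y$ is the induced \emph{sum graph} $\{u,v\}:u+v\in T-\sum_{x\in Y}x$; since a translate of a Sidon set is Sidon, two distinct vertices $a,b$ have a common neighbour $c$ only when $a-b$ is realised as a difference of two elements of this translate, which happens at most once. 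Thus every pair has at most one common neighbour, so $L_Y$ is $C_4$-free and $\cH$ is $\cC_4^{k+}$-free. To count edges I would take $n=q^2+q+1$ with $q$ a prime power and $T$ a Singer perfect difference set, so that $|T|=q+1=(1+o(1))\sqrt n$. A standard orbit-counting (roots of unity) argument shows the subset sums $\sum_{x\in e}x$ are asymptotically equidistributed over $\mathbb{Z}_n$ (exactly so when $\gcd(n,k)=1$), whence $|E(\cH)|=(1+o(1))\frac{|T|}{n}\binom{n}{k}=(1+o(1))\frac{1}{k!}n^{k-\frac12}$. For arbitrary $n$ I would pick the largest prime power $q$ with $q^2+q+1\le n$; by the small gaps between prime powers this modulus is $(1-o(1))n$, and monotonicity of $\ex_k(n,\cC_4^{k+})$ in $n$ transfers the bound to all $n$.

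The two double-counting estimates and the invocation of the $C_4$ bound are routine; \textbf{the main obstacle is the tightness of the lower bound}. It is easy to produce \emph{some} $\cC_4^{k+}$-free $k$-graph (e.g.\ by random deletion), but such constructions only reach order $n^{k-2/3}$; matching the constant $\tfrac1{k!}$ forces every link to be a near-extremal $C_4$-free graph \emph{simultaneously and consistently}, which is exactly what the sum--Sidon construction achieves. The delicate points are therefore securing a Sidon set of size $(1+o(1))\sqrt n$ that exploits the full modulus (so that no constant factor is lost), carrying out the equidistribution count of subset sums, and passing from the special moduli $q^2+q+1$ to every $n$.
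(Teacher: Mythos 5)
The paper does not prove this statement: Theorem~\ref{mubayi} is imported from Mubayi's paper \cite{mubayi-2002} as a black box, so there is no in-paper argument to compare against. Your proposal is a correct, self-contained proof and follows the standard (essentially Mubayi's) route: the observation that a copy of $\cC_4^{k+}$ is exactly a $C_4$ in the link of its unique $(k-2)$-element core is the right reduction, the double count over the $\binom{n}{k-2}$ links combined with $\ex(m,C_4)=(1+o(1))\tfrac12 m^{3/2}$ gives the upper bound with the constant $\tfrac{1}{k!}$, and the Sidon-sum construction (all of whose links are translates of a single $C_4$-free sum graph) matches it from below. The delicate points you flag are handled correctly: the equidistribution of $k$-subset sums over $\mathbb{Z}_n$ is exact for $\gcd(n,k)=1$ and has error only $O\bigl(\binom{\lfloor n/2\rfloor}{\lceil k/2\rceil}\bigr)$ per residue class in general, Singer difference sets supply Sidon sets of size $(1+o(1))\sqrt{n}$, and the passage from moduli $q^2+q+1$ to all $n$ via prime gaps and monotonicity loses only a $(1-o(1))$ factor, which is absorbed in the asymptotics.
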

The main theorem of this section establishes a direct connection between total mutual-visibility numbers of Johnson graphs and hypergraph Tur\' an numbers $\ex_k(n, \cC_4^{k+})$. For $k=2$, the Johnson graph $J(n,2)$ is the line graph of the complete graph $K_n$. Therefore, Theorem~3.7 in~\cite{Cicerone-2024+} has already established the equality $\mut(J(n,2))=\ex(n, C_4)$. Here we use a different method to approach the general problem.
\begin{theorem} \label{thm:mut-Johnson}
	It holds for every two integers $n$ and $k$ satisfying $n \ge k +2 $ and $k \ge 2$ that
	$$ \mut(J(n,k)) = \ex_k(n, \cC_4^{k+})
	$$
\end{theorem}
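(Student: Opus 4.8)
The plan is to reduce the theorem to a purely hypergraph-theoretic statement by proving that a vertex set $\cX \subseteq V(J(n,k))$ is a total mutual-visibility set if and only if its underlying $k$-graph $\cF(\cX)$ is $\cC_4^{k+}$-free. Since the map $\cX \mapsto \cF(\cX)$ is a bijection between subsets of $V(J(n,k))$ and $k$-graphs on the ground set $[n]$, and $|\cX| = |E(\cF(\cX))|$, maximising over all total mutual-visibility sets then yields $\mut(J(n,k)) = \ex_k(n, \cC_4^{k+})$ at once.

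First I would fix the metric of $J(n,k)$: for vertices $A, B$ one has $\dist(A,B) = k - |A \cap B|$, and every shortest $A,B$-path is realised by deleting the elements of $A \setminus B$ one at a time while inserting the elements of $B \setminus A$ one at a time. Writing $C = A \cap B$, $A \setminus B = \{a_1, \dots, a_d\}$ and $B \setminus A = \{b_1, \dots, b_d\}$ with $d = \dist(A,B)$, the internal vertices occurring on shortest $A,B$-paths are exactly the sets $C \cup A' \cup B'$ with $A' \subseteq \{a_1,\dots,a_d\}$, $B' \subseteq \{b_1,\dots,b_d\}$, $|A'|+|B'| = d$ and $1 \le |B'| \le d-1$; in particular, when $d = 2$ there are precisely four of them, namely $C \cup \{a_i, b_j\}$ with $i, j \in \{1,2\}$.

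For necessity I argue by contraposition. If $\cF(\cX)$ contains a copy of $\cC_4^{k+}$ with centre $Y$ of size $k-2$ and cycle $z_1 z_2 z_3 z_4$, consider its two diagonals $A = Y \cup \{z_1, z_3\}$ and $B = Y \cup \{z_2, z_4\}$. They lie at distance $2$, and by the description above their four intermediate vertices are exactly the four edges $Y \cup \{z_i, z_{i+1}\}$ of the copy, all of which belong to $\cX$. Hence $A$ and $B$ are not $\cX$-visible and $\cX$ is not a total mutual-visibility set.

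For sufficiency, which I expect to be the main obstacle, I would prove by induction on $d = \dist(A,B)$ that every pair $A,B$ is $\cX$-visible whenever $\cF(\cX)$ is $\cC_4^{k+}$-free; the cases $d \le 1$ are immediate. For $d \ge 2$ set $Y = C \cup \{a_3, \dots, a_d\}$, a set of size $k-2$, and look at the four neighbours $Y \cup \{a_i, b_j\}$ of $A$ with $i, j \in \{1,2\}$, each of which has distance $d-1$ from $B$. These four sets are precisely the edges of a $\cC_4^{k+}$ with centre $Y$ and cycle $a_1 b_1 a_2 b_2$, so $\cC_4^{k+}$-freeness forces at least one of them, say $M$, to lie in $\overline{\cX} = V(J(n,k)) \setminus \cX$. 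Applying the induction hypothesis to the pair $(M,B)$ of distance $d-1$ produces a shortest $M,B$-path internally avoiding $\cX$, and prepending the edge $AM$ gives a shortest $A,B$-path all of whose internal vertices lie in $\overline{\cX}$, as required. The delicate point is exactly this step: one must see that the distance-$2$ obstruction already governs all larger distances, and the right device is to collapse the common core into $Y = C \cup \{a_3,\dots,a_d\}$ so that the four admissible first moves assemble into a single suspended $4$-cycle, after which the induction runs with no further case analysis.
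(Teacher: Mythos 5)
Your proposal is correct and follows essentially the same route as the paper: the distance-$2$ diagonal argument inside a copy of $\cC_4^{k+}$ for the upper bound, and induction on $\dist(A,B)$ for the lower bound, where the four single-swap neighbours $Y\cup\{a_i,b_j\}$ of $A$ assemble into a suspended $4$-cycle so that $\cC_4^{k+}$-freeness guarantees a first step into $\overline{\cX}$. Your explicit description of which vertices lie on shortest $A,B$-paths is a nice addition that the paper leaves implicit, but the substance of the argument is identical.
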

\begin{proof}
	Assume that $\cX$ is a total mutual visibility set of maximum cardinality in $J(n,k)$. We first prove that the $k$-uniform underlying hypergraph $\cF(\cX)$ is $\cC_4^{k+}$-free. Suppose to the contrary that $\cC_4^{k+}$ is a subhypergraph in $\cF(\cX)$. Naming the vertices in this subhypergraph as in (\ref{eq:1}), let us consider the $k$-element sets $A_1= Y \cup \{z_1, z_3\}$ and $A_2=Y \cup \{z_2, z_4\}$. The corresponding vertices $A_1$ and $A_2$ are at distance $k-|Y|= 2$ in $J(n,k)$. Further, $A_1$ and $A_2$ have exactly four common neighbors in $J(n,k)$ and these neighbors correspond to the four edges in the subhypergraph $\cC_4^{k+}$. By the supposition, the corresponding four vertices all belong to $\cX$ and therefore, $A_1$ and $A_2$ are not $\cX$-visible. This contradiction proves that $\cF(\cX)$ is $\cC_4^{k+}$-free and, since the size of $\cX$ equals the number of edges in $\cF(\cX)$, we have 
	$$\mut(J(n,k)) = |\cX| \leq \ex_k(n, \cC_4^{k+} ).$$
	
	To prove the other direction, we suppose that $\cF(\cX)$ is $\cC_4^{k+}$-free and prove that every two vertices $A$ and $B$ are $\cX$-visible in $J(n,k)$. We proceed by induction on the distance of $A$ and $B$. If $\dist(A,B)=1$, then the statement clearly holds. Suppose that $A \cap B =D$ holds for the corresponding $k$-sets and then $\dist (A,B)=k-|D| \ge 2$. Let us choose four different vertices $z_1, z_2 \in A \setminus D$ and $w_1, w_2 \in B \setminus D$. Since $\cF(\cX)$ is $\cC_4^{k+}$-free, there are two indices $i$ and $j$ such that $A' = A \setminus \{z_i\} \cup \{w_j\}$ is not an edge in $\cF(\cX)$. Then, for the corresponding vertex, $A' \notin \cX$. As $\dist(A', B)= \dist(A,B)-1$, we can now apply the hypothesis for $A'$ and $B$. As there is a shortest path between $A'$ and $B$ such that all internal vertices belong to $\overline{\cX}$ and $A'$ itself is from $\overline{\cX}$, we can construct a shortest path between $A$ and $B$ that contains no internal vertex from $\cX$. This proves that $A$ and $B$ are $\cX$-visible and we can conclude that $\cX$ is a total mutual-visibility set if $\cF(\cX)$ is $\cC_4^{k+}$-free. 
	
	By choosing a $k$-uniform  $\cC_4^{k+}$-free hypergraph with maximum number of edges, the corresponding vertex set $\cX$ will be a total mutual-visibility set in $J(n,k)$. This proves $\mut(J(n,k)) \ge \ex_k(n, \cC_4^{k+})$ and finishes the proof of the theorem.	
\end{proof}
By Theorems~\ref{mubayi} and~\ref{thm:mut-Johnson}, we may conclude the following asymptotically tight result:
\begin{corollary} \label{cor:Johnson-asympt}
For every fixed $k \ge 2$,
$$\mut(J(n,k))=(1+o(1)) \frac{1}{k!} \, n^{k-0.5}.
$$
\end{corollary}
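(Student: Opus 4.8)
The plan is to combine the two results that immediately precede the statement, since the corollary is a direct consequence of chaining them together. By Theorem~\ref{thm:mut-Johnson}, the total mutual-visibility number of the Johnson graph satisfies the exact identity $\mut(J(n,k)) = \ex_k(n,\cC_4^{k+})$ for every $n \ge k+2$ and $k \ge 2$. I emphasize that this is a genuine equality holding for each admissible $n$, not merely an asymptotic one, so it transports any information about the Tur\'an number verbatim to $\mut(J(n,k))$.

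Next I would invoke Mubayi's asymptotic evaluation of that very Tur\'an number, namely Theorem~\ref{mubayi}, which gives $\ex_k(n,\cC_4^{k+}) = (1+o(1))\,\tfrac{1}{k!}\,n^{k-0.5}$ as $n \to \infty$ for every fixed $k \ge 2$. Substituting the exact identity of Theorem~\ref{thm:mut-Johnson} into this estimate immediately yields $\mut(J(n,k)) = (1+o(1))\,\tfrac{1}{k!}\,n^{k-0.5}$, which is exactly the claimed formula.

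Honestly, there is no real obstacle to overcome: the corollary is a one-step substitution, and all the substance is contained in the two theorems already in hand. The only point that warrants a word of care is the matching of regimes. Theorem~\ref{thm:mut-Johnson} is an identity valid for every $n$ in the admissible range, whereas Theorem~\ref{mubayi} is an asymptotic statement ``for fixed $k$ as $n \to \infty$''; because the former holds for each individual $n$, there is no compatibility issue, and the role of fixing $k$ is solely to place us in the regime in which Mubayi's theorem is stated. I would therefore present the proof as a single sentence deducing the asymptotics from the combination of Theorems~\ref{mubayi} and~\ref{thm:mut-Johnson}.
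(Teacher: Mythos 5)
Your proposal is correct and coincides exactly with the paper's derivation: the authors likewise obtain the corollary as an immediate combination of the identity $\mut(J(n,k)) = \ex_k(n,\cC_4^{k+})$ from Theorem~\ref{thm:mut-Johnson} with Mubayi's asymptotic in Theorem~\ref{mubayi}, offering no further argument. Your remark about the regimes (an exact identity for each $n$ versus an asymptotic for fixed $k$) is a fair point of care but does not change the substance.
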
 
\medskip

Finally, we turn to the problem of mutual-visibility in Johnson graphs. Here, we can give an exact formula for $k=2$ and $k=n-2$ (that was proved in~\cite{Cicerone-2024+} with a different approach), while establishing lower and upper bounds for the remaining cases.
\begin{theorem} \label{thm:MU-Johnson}
	Let $n$ and $k $ be integers. 
	\begin{itemize}
	\item[$(i)$] If $k \ge 2$ and $n \ge k +2 $, then
	\begin{equation} \label{eq:3}
	\ex_k(n, \cC_4^{k+}) \leq \mu(J(n,k)) \leq  \ex_k(n, \cK_4^{k+}).
	\end{equation}
	\item[$(ii)$]
	If $n \ge 4$, then
	\begin{equation} \label{eq:4}
	\mu(J(n,2))=\mu(J(n,n-2))= \left\lfloor \frac{n^2}{3} \right\rfloor.
	\end{equation}
	\end{itemize}
	\end{theorem}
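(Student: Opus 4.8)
\textbf{Plan for Theorem~\ref{thm:MU-Johnson}.}
The plan is to split the work into the two parts. For part $(i)$, the lower bound $\ex_k(n, \cC_4^{k+}) \leq \mu(J(n,k))$ is immediate from Theorem~\ref{thm:mut-Johnson} together with the general inequality $\mu(G) \ge \mut(G)$, so no new argument is needed there. For the upper bound $\mu(J(n,k)) \leq \ex_k(n, \cK_4^{k+})$, I would follow the forbidden-subhypergraph strategy used in the proof of Theorem~\ref{thm:mut-Johnson}, but now exploiting that only pairs \emph{inside} $\cX$ must be $\cX$-visible. The claim to establish is that if $\cX$ is a mutual-visibility set, then $\cF(\cX)$ is $\cK_4^{k+}$-free. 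Suppose a copy of $\cK_4^{k+}$ with core $Y$ and special vertices $z_1,z_2,z_3,z_4$ sits inside $\cF(\cX)$. I would consider the two vertices $A_1 = Y \cup \{z_1,z_3\}$ and $A_2 = Y \cup \{z_2,z_4\}$; as in the earlier proof these lie at distance $2$ in $J(n,k)$. The point is that \emph{all six} edges $Y \cup \{z_i,z_j\}$ are common neighbors of any such diagonal pair when $\{z_i,z_j\}$ meets both diagonals appropriately, so I must verify that the common neighbors of $A_1$ and $A_2$ at the midpoint of a shortest path are exactly covered by the six edges of $\cK_4^{k+}$, forcing both midpoint candidates for the pair $A_1,A_2$ into $\cX$ and thereby destroying the $\cX$-visibility of the two \emph{selected} vertices $A_1,A_2 \in \cX$. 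The subtlety, and the step I would check most carefully, is that $A_1,A_2$ themselves must be guaranteed to lie in $\cX$: this is why one needs the full clique $\cK_4^{k+}$ rather than merely $\cC_4^{k+}$, since the two diagonals $Y\cup\{z_1,z_3\}$ and $Y\cup\{z_2,z_4\}$ are themselves among the edges of $\cK_4^{k+}$ but not of $\cC_4^{k+}$.

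For part $(ii)$, the case $k=2$ reduces to a pure extremal graph theory computation. Since $J(n,2)$ is the line graph of $K_n$ and $\cC_4^{2+}=C_4$, $\cK_4^{2+}=K_4$, the sandwich from part $(i)$ becomes $\ex(n,C_4) \le \mu(J(n,2)) \le \ex(n,K_4)$. These two Tur\'an numbers do \emph{not} coincide, so the bounds alone are insufficient and I must pin down the exact value $\lfloor n^2/3 \rfloor$ by a direct construction and a matching upper bound. The value $\lfloor n^2/3 \rfloor = \ex(n,K_{1,1,2})$ strongly suggests the right forbidden configuration is the book/complete-tripartite graph $K_{1,1,2}$ (equivalently $K_4$ minus an edge, or a pair of triangles sharing an edge). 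I would therefore sharpen the analysis of the previous paragraph for $k=2$: the decisive observation is that in $J(n,2)$ two vertices $A_1,A_2 \in \cX$ at distance $2$ fail to be $\cX$-visible precisely when \emph{every} common neighbor lies in $\cX$, and when $A_1,A_2$ share no element this means $\cF(\cX)$ contains a $C_4$ on $A_1\cup A_2$, while when they share exactly one element it means $\cF(\cX)$ contains the two edges completing a triangle-pair; translating carefully shows that the true obstruction is a $K_{1,1,2}$ in $\cF(\cX)$, giving $\mu(J(n,2)) = \ex(n, K_{1,1,2}) = \lfloor n^2/3 \rfloor$ by the known Tur\'an value.

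The upper bound direction for $(ii)$ thus rests on proving that a mutual-visibility set $\cX$ forces $\cF(\cX)$ to be $K_{1,1,2}$-free, and the lower bound on exhibiting a $K_{1,1,2}$-free graph on $n$ vertices with $\lfloor n^2/3 \rfloor$ edges whose edge-set, viewed as a vertex set of $J(n,2)$, is genuinely mutual-visible. For the construction I would take the complete tripartite graph $K_{\lceil n/3\rceil, \lceil n/3\rceil, \lfloor n/3\rfloor}$, which is $K_{1,1,2}$-free and has $\lfloor n^2/3 \rfloor$ edges, and verify directly that each pair of its edges sharing a vertex or forming a potential $C_4$ still admits a visible shortest path through an edge outside $\cX$. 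The identification $\mu(J(n,n-2))=\mu(J(n,2))$ is free from the isomorphism $J(n,k)\cong J(n,n-k)$ noted in the terminology section. The main obstacle I anticipate is the bookkeeping in the upper bound: I must show that \emph{both} the distance-$2$, disjoint case and the distance-$2$, one-element-overlap case produce a $K_{1,1,2}$ rather than only one of them, so that no mutual-visibility set can exceed the Tur\'an bound; getting the correct forbidden graph (neither $C_4$ nor $K_4$, but $K_{1,1,2}$) is the conceptual crux that separates $\mu$ from $\mut$ here.
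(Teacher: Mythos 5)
Your part $(i)$ is correct and is essentially the paper's argument: pick two ``opposite'' edges of the copy of $\cK_4^{k+}$ (the paper uses $Y\cup\{z_1,z_2\}$ and $Y\cup\{z_3,z_4\}$, you use the other diagonal pairing; either works), note that both lie in $\cX$ because they are edges of $\cK_4^{k+}$, and that their four common neighbors are again edges of $\cK_4^{k+}$, hence in $\cX$. (Minor slip: only four of the six edges are common neighbors of the chosen pair, not six, but this does not affect the argument.)

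Part $(ii)$, however, contains a genuine error. First, in $J(n,2)$ two distinct $2$-sets are at distance $2$ if and only if they are \emph{disjoint}; if they share one element they are adjacent and trivially $\cX$-visible. So the ``one-element-overlap, distance-$2$'' case you worry about does not exist, and the only obstruction is: $A,B\in\cX$ disjoint with all four cross pairs $\{a_i,b_j\}$ also in $\cX$ --- which, together with the edges $A$ and $B$ themselves, is precisely a full $K_4$ in $\cF(\cX)$, not a $K_{1,1,2}$. Hence $\cX$ is a mutual-visibility set if and only if $\cF(\cX)$ is $K_4$-free, and the correct conclusion is $\mu(J(n,2))=\ex(n,K_4)=\lfloor n^2/3\rfloor$, exactly matching the upper bound already supplied by part $(i)$ since $\cK_4^{2+}=K_4$. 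Second, the identity you rely on, $\ex(n,K_{1,1,2})=\lfloor n^2/3\rfloor$, is false: $K_{1,1,2}$ is $K_4$ minus an edge, its Tur\'an number is $\lfloor n^2/4\rfloor+1$ (asymptotically $n^2/4$), and your proposed extremal construction $K_{\lceil n/3\rceil,\lceil n/3\rceil,\lfloor n/3\rfloor}$ is \emph{not} $K_{1,1,2}$-free (take two vertices in one part and one in each of the other two). So your route would prove a bound of roughly $n^2/4$, strictly below the claimed value, and cannot establish the theorem. The fix is the paper's lower-bound argument: take $\cX$ with $\cF(\cX)$ an extremal $K_4$-free graph; for any disjoint $A=\{a_1,a_2\}$, $B=\{b_1,b_2\}$ in $\cX$, $K_4$-freeness guarantees some cross pair $\{a_i,b_j\}\notin\cX$, which serves as the internal vertex of a visible shortest path.
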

\begin{proof}
By definitions of the parameters, $\mu(G) \ge \mut(G)$ holds for every graph $G$. Theorem~\ref{thm:mut-Johnson} then directly implies 	$\ex_k(n, \cC_4^{k+}) \leq \mu(J(n,k))$. 

To show the upper bound in (\ref{eq:3}), we prove that the $k$-uniform underlying hypergraph $\cF(\cX)$ is $\cK_4^{k+}$-free whenever $\cX$ is a mutual visibility set in $J(n,k)$. Indeed, let us suppose that $\cF(\cX)$ contains a subhypergraph on the vertex set $Y \cup \{z_1, \dots, z_4\}$ that is isomorphic to $\cK_4^{k+}$. We name the vertices according to~(\ref{eq:2}). Then, by the definition of underlying hypergraph $\cF(\cX)$, the vertices of $J(n,k)$ that correspond to the $k$-sets $Y \cup \{z_1,z_2\}$ and $Y \cup \{z_3,z_4\}$ belong to $\cX$. Their distance equals $2$ and all the four common neighbors are from $\cX$ as well. Thus, the two vertices  $Y \cup \{z_1,z_2\}$ and $Y \cup \{z_3,z_4\}$ from $\cX$ are not $\cX$-visible, and $\cX$ is not a mutual visibility set in $J(n,k)$. We may conclude that $\cF(\cX)$ is $\cK_4^{k+}$-free for every mutual visibility set $\cX$ of $J(n,k)$ and consequently, $\mu(J(n,k)) \leq  \ex_k(n, \cK_4^{k+})$ is valid.
\medskip

To prove~(\ref{eq:4}), we first recall the well-known facts that $\ex(n, K_4)= \lfloor n^2 /3 \rfloor$ \cite{turan} and  that $J(n,2) \cong J(n,n-2)$ holds for every $n \ge 4$. It is enough then to prove the equality  $\mu(J(n,2))=\ex(n,K_4)$. As $\cK_4^{2+}$ is the graph $K_4$, part $(i)$ provides the upper bound $\mu(J(n,2)) \le \ex(n,K_4)$.

Consider a set $\cX \subseteq V(J(n,2))$ so that the $2$-uniform underlying hypergraph $\cF(\cX)$ is $K_4$-free. For every two vertices $A$ and $B$ from $\cX$, they are either adjacent and $\cX$-visible or $\dist(A,B)=2$. In the latter case, the corresponding $2$-element sets are disjoint, say $A=\{a_1,a_2\}$ and $B=\{b_1,b_2\}$. Since $\cF(\cX)$ is $K_4$-free, there is a $2$-element set $\{a_i,b_j\}$ with $i,j \in [2]$ that is not an edge in $\cF(\cX)$. Thus $C=\{a_i,b_j\}$ is in $\overline{\cX}$ and $A$, $B$ are $\cX$-visible via the path $ACB$. Consequently, a mutual-visibility set $\cX$ can be chosen in $J(n,2)$ such that $\cF(\cX)$ is an extremal $K_4$-free graph having $\ex(n, K_4)$ edges. We conclude $\ex(n,K_4) \le \mu(J(n,2))$ that finishes the proof of (\ref{eq:4}).
\end{proof}

\section{Conclusion} \label{sec:concl}

In this paper, we determined the total mutual-visibility numbers for all Kneser graphs, bipartite Kneser graphs, and Johnson graphs. Our formulas gave easily computable values for some ranges, but we had to include invariants from extremal combinatorics in other cases. Computing these invariants (namely hypergraph Tur\'an numbers of suspensions and covering numbers from design theory) is considered an extremely challenging problem.
\medskip

We also studied the mutual-visibility numbers over these three graph classes and obtained exact results for all Kneser graphs $KG(n,k)$ with $n \ge 7k-5$ and $k \ge 2$ (see~Theorem~\ref{thm:Mu-Kneser}). For the remaining cases, we gave estimations (see Propositions~\ref{prop:Kneser}, \ref{prop:bipK}, and Theorem~\ref{thm:MU-Johnson}), the exact determination is posed here as an open problem.
\begin{problem}
Determine the mutual visibility number of the Kneser graph $KG(n,k)$ if $3 \le k$ and $2k+1 \le n \le 7k-6$.
\end{problem}
\begin{problem}
Determine the mutual visibility number of the bipartite Kneser graph $H(n,k)$ if $k \ge 2$ and $n \ge 2k+1$.
\end{problem}
\begin{problem}
	Determine the mutual visibility number of the Johnson graph $H(n,k)$ if $k \ge 3$ and $n \ge k+2$.
\end{problem}

For every graph $G$, the relation $\mu(G)\ge \mut(G)$ holds by definition. If $G$ satisfies $\mu(G)=\mut(G)$, we call it \textit{$(\mu, \mut)$-graph}. Our Theorems~\ref{thm:mut-Kneser} and \ref{thm:Mu-Kneser} imply that all Kneser graphs $KG(n,k)$ with $k \ge  2$ and $n \ge 7k-5$ are $(\mu,\mut)$-graphs. Further, by the same theorems, $K(8,2)$ is also a $(\mu,\mut)$-graph. 
\medskip

Cicerone \textit{et al.}\ \cite{Cicerone-2023+4} introduced further parameters on mutual-visibility. Given a graph $G$ and a vertex set $\cX$, we say that $\cX$ is a \textit{dual mutual-visibility set}, if every two vertices from $\cX$ and every two vertices from $\overline{\cX}$ are $\cX$-visible. The maximum size of a dual mutual-visibility set in $G$ is the \textit{dual mutual-visibility number} $\mud(G)$. Similarly, if $\cX$-visibility is required for every two vertices $u,v$ when at least one of them is from $\cX$, then $\cX$ is an \textit{outer mutual-visibility set} and the maximum size of such a set is denoted by $\muo(G)$. It holds by definitions that
\begin{equation} \label{eq:dual}
\mut(G) \le \mud(G) \leq \mu(G) \qquad \mbox{and} \qquad \mut(G) \le \muo(G) \leq \mu(G).
\end{equation}
It follows that the four parameters are equal for each $(\mu, \mut)$-graph.
\medskip

As a consequence of (\ref{eq:dual}), Theorems~\ref{thm:mut-Kneser} and \ref{thm:Mu-Kneser}, it is easy to determine the four mutual-visibility parameters for Kneser graphs $KG(n,2)$ with $n \ge 8$. By doing so, we answer a question of Cicerone \textit{et al.} posed in~\cite{Cicerone-2024+}.
\begin{proposition}
It holds for every integer $n$ with $n \ge 8$ that
$$ \mu(KG(n,2))= \mud(KG(n,2))= \muo(KG(n,2))= \mut(KG(n,2))= {n \choose 2} -4.
$$
\end{proposition}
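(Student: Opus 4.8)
The plan is to chain together the three results already proved for the Kneser graphs $KG(n,2)$ together with the inequalities (\ref{eq:dual}) to pin down all four parameters simultaneously. First I would fix $n \ge 8$ and specialize the earlier theorems to $k=2$. Theorem~\ref{thm:mut-Kneser} gives the total mutual-visibility number $\mut(KG(n,2))$, and I would observe that for $k=2$ the threshold $2k^2 = 8$ means that the condition $n \ge 8$ places us in the regime where the clean formula $\mut(KG(n,2)) = {n \choose 2} - 2k = {n \choose 2} - 4$ applies. (The one boundary case $n=8$ must be checked against the relevant branch; since $2k^2 = 8$, the condition $2k^2 \le n$ is met with equality, so the formula $\binom{n}{2}-4$ is valid there too.)

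Next I would invoke Theorem~\ref{thm:Mu-Kneser} for the upper end of the chain. For $k=2$ the hypothesis $n \ge 7k-5 = 9$ yields $\mu(KG(n,2)) = {n \choose 2} - C^*(n,2) = {n \choose 2} - 4$ whenever $n \ge 9$, using $C^*(n,2) = 4$ from Lemma~\ref{lem:C*}(i). The case $n = 8$ is explicitly handled at the very end of the proof of Theorem~\ref{thm:Mu-Kneser}, which shows $\mu(KG(8,2)) = {8 \choose 2} - 4$ as well. Hence for every $n \ge 8$ we have both $\mut(KG(n,2)) = {n\choose 2}-4$ and $\mu(KG(n,2)) = {n\choose 2}-4$.

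The final step is a simple squeeze. By the inequalities in (\ref{eq:dual}),
\[
\mut(KG(n,2)) \le \mud(KG(n,2)) \le \mu(KG(n,2)) \qquad\text{and}\qquad \mut(KG(n,2)) \le \muo(KG(n,2)) \le \mu(KG(n,2)).
\]
Since the two outer terms in each chain coincide at the common value ${n \choose 2} - 4$, every intermediate parameter is forced to equal ${n \choose 2} - 4$ as well. This yields the claimed equality for all four invariants and in particular answers the question of Cicerone \emph{et al.}\ from~\cite{Cicerone-2024+}.

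There is essentially no hard step here: the entire content has been front-loaded into Theorems~\ref{thm:mut-Kneser} and~\ref{thm:Mu-Kneser}, so the proposition is a corollary of the observation that $KG(n,2)$ with $n \ge 8$ is a $(\mu,\mut)$-graph, combined with the sandwiching (\ref{eq:dual}). The only point requiring a moment's care is the boundary value $n = 8$, where one must confirm that both the total and the ordinary mutual-visibility formulas evaluate to $\binom{8}{2} - 4 = 24$; this is precisely the case treated separately in the proof of Theorem~\ref{thm:Mu-Kneser}, so no new argument is needed.
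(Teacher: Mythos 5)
Your proposal is correct and follows exactly the route the paper intends: specialize Theorem~\ref{thm:mut-Kneser} and Theorem~\ref{thm:Mu-Kneser} to $k=2$ (noting that the second statement of Theorem~\ref{thm:Mu-Kneser} covers $n\ge 2k^2=8$, with $KG(8,2)$ treated separately at the end of its proof), then squeeze $\mud$ and $\muo$ between $\mut$ and $\mu$ via~(\ref{eq:dual}). Your attention to the boundary case $n=8$ matches the paper's own handling, so nothing further is needed.
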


\section*{Acknowledgements}

This research was initiated with the visit of Csilla Bujt\'as to Ege University in Izmir,
Turkey; the authors thank the financial support of TÜBITAK under the grant BIDEB 2221 (1059B212300041). The research of Csilla Bujt\' as was partially supported by the 
Slovenian Research and Innovation Agency (ARIS) under the grant P1-0297.






\end{document}